\documentclass[12pt]{article}
\usepackage{times}
\usepackage{booktabs}
\usepackage{pifont}
\usepackage{caption}
\usepackage{mathrsfs}
\usepackage[fleqn]{amsmath}
\usepackage{amsfonts,amsthm,amssymb,mathrsfs,bbding}
\usepackage{txfonts}
\usepackage{graphics,multicol}
\usepackage{graphicx}
\usepackage{color}
\usepackage{enumerate}
\usepackage{caption}
\captionsetup{%
  figurename=Fig.,
  tablename=Tab.
}
\usepackage{cite}
\usepackage{latexsym,bm}
\usepackage{indentfirst}
\usepackage{mathtools}
\pagestyle{myheadings} \markright{} \textwidth 150mm \textheight 235mm \oddsidemargin=1cm
\evensidemargin=\oddsidemargin\topmargin=-1.5cm

\newtheorem{thm}{Theorem}[section]

\newtheorem{lem}{Lemma}[section]
\newtheorem{cor}{Corollary}[section]

\newtheorem{remark}{Remark}
\newtheorem{exam}{Example}

\theoremstyle{definition}

\addtocounter{section}{0}
\begin{document}
\title{Enumeration of cubic Cayley graphs on dihedral groups\footnote{This work is supported
by the National Natural Science Foundation of China (Grant Nos. 11671344, 11261059 and  11531011).}}
\author{{\small Xueyi Huang, \ \ Qiongxiang Huang\footnote{
Corresponding author.}\setcounter{footnote}{-1}\footnote{
\emph{E-mail address:} huangqx@xju.edu.cn.}, \ \ Lu Lu}\\[2mm]\scriptsize
College of Mathematics and Systems Science,
\scriptsize Xinjiang University, Urumqi, Xinjiang 830046, P. R. China}
\date{}
\maketitle
{\flushleft\large\bf Abstract}
Let $p$ be an odd prime, and $D_{2p}=\langle a,b\mid a^p=b^2=1,bab=a^{-1}\rangle$ the dihedral group of order $2p$. In this paper, we completely classify the cubic Cayley graphs on $D_{2p}$ up to isomorphism by means of spectral method. By the way, we show that two cubic Cayley graphs on $D_{2p}$ are isomorphic if and only if they are cospectral. Moreover, we obtain the number of isomorphic classes of cubic Cayley graphs on $D_{2p}$ by using Gauss' celebrated law of quadratic reciprocity.
\vspace{0.1cm}
\begin{flushleft}
\textbf{Keywords:} Cayley graph; dihedral group; cospectral; isomorphic classes; quadratic reciprocity.
\end{flushleft}
\textbf{AMS Classification:} 05C25, 05C50.

\section{Introduction}\label{s-1}
Let $G$ be a finite group, and let $S$ be a subset of $G$ such that $1\not\in S$ and $S$ is symmetric, that is, $S^{-1}=\{s^{-1}\mid s\in S\}$ is equal to $S$. The \emph{Cayley graph} on $G$ with respect to $S$, denoted by $X(G,S)$, is the undirected graph with vertex set $G$ and with an edge $\{g,h\}$ connecting $g$ and $h$ if $hg^{-1}\in S$, or equivalently $gh^{-1}\in S$. In particular, if $G$ is a cyclic group, then  the Cayley graph $X(G,S)$ is  called a \emph{circulant graph}.

Let $X(G,S)$ be the Cayley graph on $G$ with respect to $S$. Suppose that $\sigma\in\mathrm{Aut}(G)$, where $\mathrm{Aut}(G)$ is the \emph{automorphism group} of $G$. Let $T=\sigma(S)$. Then it is easily shown that $\sigma$ induces an isomorphism from $X(G,S)$ to $X(G,T)$. Such an isomorphism is called a \emph{Cayley isomorphism}. However, it is possible for two Cayley graphs $X(G,S)$ and $X(G,T)$ to be isomorphic but no Cayley isomorphisms mapping $S$ to $T$. The Cayley graph $X(G,S)$ is called a \emph{CI-graph} of $G$ if, for any Cayley graph $X(G,T)$, whenever $X(G,S)\cong X(G,T)$ we have $\sigma(S)=T$ for some $\sigma\in\mathrm{Aut}(G)$. A group $G$ is called a \emph{CI-group} if all Cayley graphs on $G$ are CI-graphs. A long-standing open question about Cayley graphs is as follows: which Cayley graphs for a group $G$ are CI-graphs? This question stems from a conjecture proposed by \'{A}d\'{a}m\cite{Adam}: all circulant graphs are CI-graphs of the corresponding cyclic groups. This conjecture was disproved by Elspas and Turner \cite{Elspas}, and however, the conjecture stimulated the investigation of CI-graphs and CI-groups \cite{Babai1,Muzychuk,Muzychuk1,Hirasaka,Dobson,Dobson1,Huang,Huang1,Huang2,Li2}. Another motivation for investigating CI-graphs is to determine the isomorphic classes of Cayley graphs. By the definition, if $X(G,S)$ is a CI-graph, then to decide whether or not $X(G,S)$ is isomorphic to $X(G,T)$, we only need to decide whether or not there exists an automorphism $\sigma\in\mathrm{Aut}(G)$ such that $\sigma(S)=T$. The isomorphic classes of some families of Cayley graphs which are edge-transitive but not arc-transitive were determined in \cite{Li1,Xu1}. For more results about CI-graphs and determination for isomorphic classes of Cayley graphs, one can see  the review paper \cite{Li} and references therein.

The \emph{adjacency spectrum}  of a graph $\Gamma$, denoted by $\mathrm{Spec}(\Gamma)$, is the multiset of eigenvalues of its adjacency matrix. Two graphs are called \emph{cospectral} if they share  the same adjacency spectrum. A graph $\Gamma$ is said to be \emph{determined by its spectrum} (DS for short) if every graph cospectral with it is in fact isomorphic to it.  The question `which graphs are DS?' goes back for about half a century, and originates from chemistry \cite{Gunthard}. In the beginning it was believed that every graph is DS until Collatz and Sinogowitz \cite{Collatz} presented a pair of cospectral trees. In fact, Schwenk \cite{Schwenk} stated that almost all trees are non-DS. Nevertheless, it is strongly believed that almost all graphs are DS. In the past twenty years, the DS problem has aroused a lot of investigation and one can refer to \cite{Dam,Dam1} for surveys.

As Cayley graphs have rich structure properties, considering the DS problem for Cayley graphs is interesting for some authors. To solve the DS problem for Cayley graphs, we first need to consider the following problem:  for a Cayley graph $X(G,S)$, whether or not there exists a Cayley graph on $G$ which is cospectral with $X(G,S)$ but not isomorphic to it. For this purpose, a Cayley graph $X(G,S)$ is said to be \emph{Cay-DS}  if, for any Cayley graph $X(G,T)$, $\mathrm{Spec}(X(G,S))=\mathrm{Spec}(X(G,T))$ implies that $X(G,S)\cong X(G,T)$. Note that a DS Cayley graph is always Cay-DS, but  the converse is not always right. For example, it is known that there are exactly $41$ strongly $14$-regular graphs on $29$ vertices with parameters $(29,14,6,7)$ (see \cite{Colbourn},  p. 856). One of these strongly regular graphs is the Payley graph $P(29)$, which is also a circulant graph \cite{Abdollahi}. Thus $P(29)$ is not DS. However, $P(29)$ is Cay-DS because all circulant graphs of prime order are Cay-DS \cite{Elspas}. Up to now, there are few results about the Cay-DS problem. Elspas and Turner \cite{Elspas} gave some pairs of non-isomorphic cospectral circulant graphs. Babai \cite{Babai} and Abdollahi et. al. \cite{Abdollahi} presented some pairs of non-isomorphic cospectral Cayley graphs on the dihedral group of order $2p$ for any prime $p\geq 13$. Chang and the author \cite{Huang1} proved that a circulant graph whose order is a prime power or the product of two distinct primes is Cay-DS if its generating set satisfies some conditions. However, the Cay-DS problem has far from been resolved.

Let $p$ be an odd prime, and $D_{2p}=\langle a,b\mid a^p=b^2=1,bab=a^{-1}\rangle$ the dihedral group of order $2p$. In this paper, we completely classify the cubic Cayley graphs on $D_{2p}$ up to isomorphism by means of  spectral method, and show that all these graphs are CI-graphs. By the way, we prove that all cubic Cayley graphs on $D_{2p}$ are Cay-DS. Moreover, we obtain the number of isomorphic classes of cubic Cayley graphs on $D_{2p}$ by using Gauss' celebrated law of quadratic reciprocity.

\section{The spectra of Cayley graphs on dihedral groups}
First of all, we recall some basic notions and results of representation theory that will be useful in the subsequent sections.

Let $V$ be a finite dimensional vector space over the field $\mathbb{C}$ of complex numbers. A \emph{representation} of $G$ in $V$ is a  homomorphism $\rho:G\rightarrow GL(V)$, where $GL(V)$ denotes the group of isomorphisms of $V$ onto itself. The dimension of $V$ is called the \emph{degree} of $\rho$. Two representations $\rho:G\rightarrow GL(V)$ and $\varphi:G\rightarrow GL(W)$  are said to be \emph{equivalent}, denoted by  $\rho\sim\varphi$, if there exists an isomorphism $T: V\rightarrow W$ such that $T\rho(g)=\varphi(g)T$ for all $g\in G$.

Let $\rho:G\rightarrow GL(V)$ be a representation. A subspace $W$ of $V$ is \emph{$G$-invariant} if, for all $g\in G$ and $w\in W$, one has $\rho(g)w\in W$. In this case, the restriction of $\rho$ to $W$, i.e.,  $\rho_{|W}:G\rightarrow GL(W)$, is also a representation which is called a \emph{subrepresentation} of $\rho$. If the only $G$-invariant subspaces of $V$ are $\{0\}$ and $V$, then $\rho$ is said to be \emph{irreducible}. It is well known that if $W$ is a $G$-invariant subspace of $V$, then there exists a complement $W^0$ of $W$ in $V$ which is also $G$-invariant. Thus every representation is a direct sum of irreducible representations. The \emph{character} $\chi_{\rho}: G\rightarrow \mathbb{C}$ of $\rho$ is defined by setting $\chi_{\rho}(g)=\mathrm{Tr}(\rho(g))$, where $\mathrm{Tr}(\rho(g))$ is the trace of the representation matrix of $\rho(g)$ with respect to some basis of $V$. Clearly, the degree of $\rho$ equals to $\chi_\rho(1)$, and equivalent representations have the same characters. The character of an irreducible representation is called an \emph{irreducible character}. One can refer to \cite{Jean,Steinberg} for more information about representation theory.

Let $G$ be a finite group. We bulid synthetically a vector space with basis $G$ by setting
$$\mathbb{C}G=\{\sum_{g\in G}c_gg\mid c_g\in \mathbb{C}\}.$$
The \emph{(left) regular representation} of $G$ is the homomorphism $L:G\rightarrow GL(\mathbb{C}G)$ defined by
$$L(g)\sum_{x\in G}c_xx=\sum_{x\in G}c_xgx=\sum_{y\in G}c_{g^{-1}y}y$$
for $g\in G$. The following result is well known.
\begin{lem}[\cite{Jean,Steinberg}]\label{lem-2-1}
Let $L$ be the regular representation of $G$. Then
$$L\sim d_1\rho_1\oplus d_2\rho_2\oplus\cdots\oplus d_s\rho_s,$$
where $\rho_1,\ldots,\rho_s$ are all the non-equivalent irreducible representations of $G$ and $d_i$ is the degree of $\rho_i$ ($1\le i\le s$).
\end{lem}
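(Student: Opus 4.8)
The plan is to compute multiplicities via character theory. Since the excerpt already grants that every representation is a direct sum of irreducible representations, I may write $L \sim \bigoplus_{i=1}^{s} m_i \rho_i$ for some nonnegative integers $m_i$, so the entire task reduces to showing $m_i = d_i$ for each $i$. Passing to characters, and using that the character of a direct sum is the sum of the characters while equivalent representations share the same character, this amounts to the identity $\chi_L = \sum_{i=1}^s m_i \chi_{\rho_i}$ together with the determination of each coefficient $m_i$.

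First I would compute $\chi_L$ explicitly. By definition $L(g)$ acts on the basis $G$ of $\mathbb{C}G$ by the permutation $x \mapsto gx$, so its matrix in this basis is a permutation matrix whose trace counts the fixed points, i.e. the number of $x \in G$ with $gx = x$. For $g = 1$ every basis vector is fixed, giving $\chi_L(1) = |G|$; for $g \neq 1$ the equation $gx = x$ forces $g = 1$, a contradiction, so $\chi_L(g) = 0$. Hence $\chi_L$ is supported only at the identity.

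Next I would isolate each multiplicity using the Hermitian inner product $\langle \phi, \psi \rangle = \frac{1}{|G|}\sum_{g \in G} \phi(g)\overline{\psi(g)}$ on class functions, under which the irreducible characters $\chi_{\rho_1}, \ldots, \chi_{\rho_s}$ form an orthonormal system (the first orthogonality relation). Taking the inner product of $\chi_L = \sum_i m_i \chi_{\rho_i}$ with $\chi_{\rho_j}$ and invoking orthonormality yields $m_j = \langle \chi_L, \chi_{\rho_j} \rangle$. Because $\chi_L$ vanishes away from the identity, this inner product collapses to its single nonzero term $\frac{1}{|G|}\chi_L(1)\overline{\chi_{\rho_j}(1)} = \overline{\chi_{\rho_j}(1)} = d_j$, where I use $\chi_{\rho_j}(1) = d_j$ from the excerpt and that the degree $d_j$ is a (real) positive integer. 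Thus $m_j = d_j$ for every $j$, which is exactly the asserted decomposition.

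The only genuinely nontrivial ingredient here is the orthonormality of the irreducible characters, whose proof rests on Schur's lemma applied to intertwiners between irreducibles; this is precisely the point where I expect any difficulty to lie, and I would cite it as a standard fact from \cite{Jean,Steinberg} rather than reprove it. Everything else is an elementary trace computation, so beyond invoking orthogonality I anticipate no real obstacle.
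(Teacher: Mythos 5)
Your proof is correct. The paper itself offers no proof of this lemma---it is stated as a known result with a citation to \cite{Jean,Steinberg}---and your argument (compute $\chi_L$ as the fixed-point count of the permutation action, so $\chi_L(1)=|G|$ and $\chi_L(g)=0$ for $g\neq 1$, then extract each multiplicity as $\langle \chi_L,\chi_{\rho_j}\rangle=d_j$ via orthonormality of irreducible characters) is precisely the standard proof given in those references, so there is nothing to compare beyond noting that you have correctly reconstructed it.
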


Let $X(G,S)$ be the Cayley graph on $G$ with respect to $S$, and let $L$ be the regular representation of $G$. For $g\in G$, denote by $R(g)$ the representation matrix of $L(g)$ with respect to the basis $G$ of $\mathbb{C}G$. In \cite{Babai}, Babai expressed the adjacency matrix of $X(G,S)$ in terms of those $R(g)$.
\begin{lem}[\cite{Babai}]\label{lem-2-2}
The adjacency matrix of the Cayley graph $X(G,S)$ is equal to $A=\sum_{s\in S}R(s)$, where $R(s)$ is the representation matrix of $L(s)$ for $s\in S$.
\end{lem}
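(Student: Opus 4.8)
The plan is to establish the matrix identity $A=\sum_{s\in S}R(s)$ by comparing both sides entry by entry, indexing the rows and columns of every matrix by the elements of $G$. For arbitrary $g,h\in G$, I would write $A_{g,h}$ for the $(g,h)$-entry of the adjacency matrix of $X(G,S)$, which by definition equals $1$ precisely when $\{g,h\}$ is an edge, i.e. when $hg^{-1}\in S$ (equivalently $gh^{-1}\in S$, the two conditions agreeing because $S=S^{-1}$), and equals $0$ otherwise.

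Next I would read off the entries of each permutation matrix $R(s)$ directly from the defining formula for the left regular representation. Since $L(s)\sum_{x\in G}c_x x=\sum_{y\in G}c_{s^{-1}y}y$, the matrix $R(s)$ acts on the coefficient vector $(c_x)_{x\in G}$ by $(R(s)c)_y=c_{s^{-1}y}$, so its $(g,h)$-entry satisfies $R(s)_{g,h}=1$ exactly when $h=s^{-1}g$, that is, when $gh^{-1}=s$, and is $0$ otherwise. Thus each $R(s)$ is the permutation matrix recording which ordered pairs $(g,h)$ satisfy $gh^{-1}=s$.

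Summing over $S$ then gives, for each pair $(g,h)$,
\[
\Big(\sum_{s\in S}R(s)\Big)_{g,h}=\sum_{s\in S}[\,gh^{-1}=s\,]=[\,gh^{-1}\in S\,],
\]
where the bracket denotes the indicator that the enclosed condition holds; the terms of the sum are mutually exclusive because $gh^{-1}$ can coincide with at most one element of $S$. This final quantity is exactly $A_{g,h}$, so the two matrices agree in every entry and the identity follows. The only genuine care required is bookkeeping with the index conventions of the regular representation—keeping straight whether $gh^{-1}$ or $hg^{-1}$ appears—and here the symmetry $S=S^{-1}$ guarantees that $\sum_{s\in S}R(s)$ comes out symmetric, as it must for the adjacency matrix of an undirected graph, which serves as a built-in consistency check on the computation.
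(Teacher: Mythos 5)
Your proof is correct and complete: the entrywise verification that $R(s)_{g,h}=1$ exactly when $gh^{-1}=s$, followed by summing over $s\in S$, is precisely the standard argument, and your index bookkeeping matches the paper's convention $(L(s)c)_y=c_{s^{-1}y}$ for the left regular representation. Note that the paper itself offers no proof of this lemma---it is imported from Babai's 1979 paper as a known result---so there is nothing internal to compare against; your argument, including the observation that $S=S^{-1}$ makes $\sum_{s\in S}R(s)$ symmetric as a consistency check, is exactly what a self-contained proof should look like.
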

Denote by $\rho_1,\ldots,\rho_h$ all the non-equivalent irreducible representations of $G$ with degrees $d_1,\ldots,d_h$ ($d_1^2+\cdots+d_h^2=n$), respectively,  and $R_i(g)$ the representation matrix of $\rho_i(g)$ for $g\in G$. By Lemma \ref{lem-2-1}, we have
$$L\sim d_1\rho_1\oplus\cdots\oplus d_h\rho_h,$$
and thus there exists an invertible matrix $P$ such that
$$PR(g)P^{-1}=d_1R_1(g)\oplus d_2R_2(g)\oplus\cdots\oplus d_hR_h(g)$$
for $g\in G$. Therefore, we have
$$PAP^{-1}=\sum_{s\in S}PR(s)P^{-1}=d_1\sum_{s\in S}R_1(s)\oplus\cdots \oplus d_h\sum_{s\in S}R_h(s).$$
According to this equality, Babai \cite{Babai} derived an expression for the spectrum of  the Cayley graph $X(G,S)$ in terms of irreducible characters of $G$.
%Suppose that $\lambda_{i,1},\ldots,\lambda_{i,d_i}$ are all the eigenvalues of the matrix $\sum_{s\in S}R_i(s)$ ($1\le i\le h$), then the spectrum of the Cayley graph $Cay(G,S)$ is given by
%$$Spec(X)=\{[\lambda_{1,1}]^{d_1},\ldots,
%[\lambda_{1,d_1}]^{d_1},\ldots,[\lambda_{h,1}]^{d_h},
%\ldots,[\lambda_{h,d_h}]^{d_h}\}.$$
\begin{lem}[\cite{Babai}]\label{lem-2-3}
The spectrum of the Cayley graph $X(G,S)$ is given by
$$\mathrm{Spec}(X(G,S))=\Big\{[\lambda_{i,k_i}]^{d_i}\mid 1\leq i\leq h, 1\leq k_i\leq d_i\Big\},$$
where
$$\sum_{1\leq k_i\leq d_i}\lambda_{i,k_i}^t=\sum_{s_1,\ldots,s_t\in S}\chi_{\rho_i}\Big(\prod_{k=1}^ts_t\Big)$$
holds for any $t\in \mathbb{N}^*$, and $\chi_{\rho_i}$ is the irreducible character of $\rho_i$ with degree $d_i$ for $1\leq i\leq h$.
\end{lem}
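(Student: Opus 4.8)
The plan is to read the spectrum straight off the block decomposition of $A$ derived just before the statement, and then to compute the $t$-th power sums of each block by exploiting that every $\rho_i$ is a group homomorphism.

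First I would recall that similar matrices have the same characteristic polynomial, hence the same multiset of eigenvalues. We have already exhibited an invertible $P$ with
$$PAP^{-1}=d_1\sum_{s\in S}R_1(s)\oplus\cdots\oplus d_h\sum_{s\in S}R_h(s),$$
where the factor $d_i$ is to be read as $d_i$ identical diagonal blocks, each equal to $M_i:=\sum_{s\in S}R_i(s)$. Consequently the spectrum of $A$ is the disjoint union over $1\le i\le h$ of the spectra of these blocks, and the $d_i$ eigenvalues $\lambda_{i,1},\ldots,\lambda_{i,d_i}$ of the single $d_i\times d_i$ matrix $M_i$ each acquire an extra multiplicity $d_i$ from the $d_i$ repeated copies. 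This produces exactly $\mathrm{Spec}(X(G,S))=\{[\lambda_{i,k_i}]^{d_i}\mid 1\le i\le h,\ 1\le k_i\le d_i\}$, and the total count $\sum_i d_i^2=n$ matches the number of vertices of $X(G,S)$. Since $S$ is symmetric, $A$ is real symmetric, so all the $\lambda_{i,k_i}$ are real.

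Next I would establish the power-sum identity by a trace computation. Fix $i$ and $t\in\mathbb{N}^*$. The $t$-th power sum of the eigenvalues of $M_i$, counted with algebraic multiplicity, equals $\mathrm{Tr}(M_i^t)$, so no diagonalizability of the individual block $M_i$ is required. Expanding,
$$M_i^t=\Big(\sum_{s\in S}R_i(s)\Big)^t=\sum_{s_1,\ldots,s_t\in S}R_i(s_1)\cdots R_i(s_t),$$
and since $\rho_i$ is a representation, $R_i$ is multiplicative, whence $R_i(s_1)\cdots R_i(s_t)=R_i(s_1\cdots s_t)$. Taking traces term by term and using $\chi_{\rho_i}(g)=\mathrm{Tr}(R_i(g))$ gives
$$\sum_{k_i=1}^{d_i}\lambda_{i,k_i}^t=\mathrm{Tr}(M_i^t)=\sum_{s_1,\ldots,s_t\in S}\chi_{\rho_i}\Big(\prod_{k=1}^t s_k\Big),$$
which is the asserted formula.

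I do not expect a serious obstacle: the argument is essentially bookkeeping layered on top of Lemmas \ref{lem-2-1} and \ref{lem-2-2}. The one point needing care is the correct reading of the symbol $d_i\sum_{s\in S}R_i(s)$ as $d_i$ repeated copies of $M_i$ rather than a scalar multiple, since this is precisely what yields the multiplicity $d_i$ in the spectrum and makes the multiplicity bookkeeping come out right. A secondary remark worth including is that the traces, and hence the $\lambda_{i,k_i}$ recovered (through Newton's identities) from all their power sums, are independent of the choice of $P$ and of the basis realizing $\rho_i$, because equivalent representations share the same character.
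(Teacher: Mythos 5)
Your proposal is correct and follows essentially the same route the paper sketches immediately before the lemma: the block decomposition $PAP^{-1}=d_1\sum_{s\in S}R_1(s)\oplus\cdots\oplus d_h\sum_{s\in S}R_h(s)$ (with $d_i$ read as repeated copies), from which the multiplicities $d_i$ are read off and the power sums follow by the trace computation $\mathrm{Tr}(M_i^t)=\sum_{s_1,\ldots,s_t\in S}\chi_{\rho_i}(s_1\cdots s_t)$; the paper itself leaves these details to the cited reference of Babai. Your observation that no diagonalizability of the individual blocks is needed (and your corrected reading of the product as $\prod_{k=1}^t s_k$, fixing the paper's typo $\prod_{k=1}^t s_t$) are both sound.
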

Let $\mathbb{Z}_n$ be the cyclic group of integers module  $n$. It is well  known that the irreducible character $\phi_h$ ($1\leq h\leq n$) of $\mathbb{Z}_n$ is given by
$$\phi_h(k)=e^{\frac{2\pi hk}{n}i}=\cos{\frac{2\pi hk}{n}}+i\sin{\frac{2\pi hk}{n}},~\mbox{where $0\leq k\leq n-1$}.$$
Suppose that $S\subseteq \mathbb{Z}_n\setminus\{0\}$ and $S=-S$. Then the Cayley graph $X(\mathbb{Z}_n,S)$ is a circulant graph. By Lemma \ref{lem-2-3}, one can easily obtain the spectra of circulant grpahs.
\begin{lem}\label{lem-2-4}
Let $\mathbb{Z}_n$ be the cyclic group of integers module  $n$. Suppose that $S\subseteq \mathbb{Z}_n\setminus\{0\}$ and $S=-S$.  Then
the circulant graph $X(\mathbb{Z}_n,S)$ has eigenvalues
$$\lambda_h=\sum_{k\in S}\phi_h(k)=\sum_{k\in I}e^{\frac{2\pi hk}{n}i}=\sum_{k\in I}\cos{\frac{2\pi hk}{n}},$$
where $1\leq h\leq n$.
\end{lem}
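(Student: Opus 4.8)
The plan is to specialize the general spectral formula of Lemma~\ref{lem-2-3} to the abelian group $\mathbb{Z}_n$. Since $\mathbb{Z}_n$ is abelian, all of its irreducible representations are one-dimensional; there are exactly $n$ of them, namely $\phi_1,\ldots,\phi_n$, each of degree $d_i=1$, and the irreducible character of each coincides with the representation itself. Consequently every irreducible representation contributes exactly one eigenvalue, which is precisely why the spectrum is indexed by the single parameter $h$ with $1\le h\le n$.

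Next I would invoke Lemma~\ref{lem-2-3} with $t=1$. Because each $d_i=1$, the inner sum $\sum_{1\le k_i\le d_i}\lambda_{i,k_i}^{t}$ collapses to the single term $\lambda_h$, while the right-hand side becomes $\sum_{s\in S}\chi_{\rho_h}(s)=\sum_{k\in S}\phi_h(k)$. Substituting the explicit formula $\phi_h(k)=e^{\frac{2\pi hk}{n}i}$ recalled just before the statement then yields the first two equalities $\lambda_h=\sum_{k\in S}\phi_h(k)=\sum_{k\in S}e^{\frac{2\pi hk}{n}i}$.

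The only remaining point is to pass from the complex exponential sum to a sum of cosines, and this is where the hypothesis $S=-S$ is used. Writing $e^{\frac{2\pi hk}{n}i}=\cos\frac{2\pi hk}{n}+i\sin\frac{2\pi hk}{n}$, I would pair each $k\in S$ with its negative $-k\in S$ (equivalently $n-k\in S$). Since $\sin\frac{2\pi h(n-k)}{n}=\sin\!\big(2\pi h-\frac{2\pi hk}{n}\big)=-\sin\frac{2\pi hk}{n}$, the imaginary contributions of $k$ and $-k$ cancel, so the total imaginary part vanishes; any self-paired element (one with $2k\equiv 0\pmod n$, which can occur only when $n$ is even) contributes $\sin(\pi h)=0$ and causes no difficulty. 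Hence $\lambda_h=\sum_{k\in S}\cos\frac{2\pi hk}{n}$, completing the chain of equalities. The argument is essentially a direct specialization, so there is no substantive obstacle; the only care required is the bookkeeping in the sine cancellation and the treatment of a possible self-inverse element of $S$.
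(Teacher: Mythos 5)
Your proposal is correct and follows exactly the route the paper intends: the paper states Lemma~\ref{lem-2-4} as an immediate consequence of Lemma~\ref{lem-2-3} together with the explicit characters $\phi_h(k)=e^{\frac{2\pi hk}{n}i}$ of $\mathbb{Z}_n$, which is precisely your specialization to one-dimensional representations with $t=1$. Your extra care with the sine cancellation under $S=-S$ (including the self-paired element $k=n/2$ when $n$ is even) just fills in the detail the paper leaves implicit.
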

Recall that $D_{2n}=\langle a,b\mid a^n=b^2=1,bab=a^{-1}\rangle$ is the dihedral group of order $2n$. In order to determine the spectra of Cayley graphs on $D_{2n}$,  we first need to list the character table of $D_{2n}$.

\begin{lem}[\cite{Jean}]\label{lem-2-5}
Let $D_{2n}=\langle a,b\mid a^n=b^2=1,bab=a^{-1}\rangle$ be the dihedral group of order $2n$. Then the character table of $D_{2n}$ is shown in Tab. \ref{tab-1}.
\end{lem}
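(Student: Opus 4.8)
The plan is to derive the character table by constructing all irreducible representations of $D_{2n}$ explicitly and then reading off their traces on a set of conjugacy-class representatives. Since $D_{2n}=\langle a,b\mid a^n=b^2=1,bab=a^{-1}\rangle$ contains the normal cyclic subgroup $\langle a\rangle\cong\mathbb{Z}_n$ of index $2$, every element is uniquely of the form $a^k$ or $a^kb$ with $0\le k\le n-1$, and the relation $ba^kb=a^{-k}$ governs all conjugation. First I would determine the conjugacy classes: the identity; the rotation pairs $\{a^k,a^{-k}\}$; and the reflections $a^kb$. Because $a^j(a^kb)a^{-j}=a^{k+2j}b$, the parity of $n$ intervenes: when $n$ is odd all reflections fuse into a single class and there are $(n+3)/2$ classes in total, whereas when $n$ is even the reflections split into two classes according to the parity of $k$, the element $a^{n/2}$ becomes central, and there are $n/2+3$ classes in total.

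Next I would write down the linear characters, which factor through the abelianization $D_{2n}/[D_{2n},D_{2n}]$. A short computation gives $[a,b]=a^{-2}$, so $[D_{2n},D_{2n}]=\langle a^2\rangle$, and this quotient is $\mathbb{Z}_2$ for odd $n$ (giving the trivial and sign characters) and $\mathbb{Z}_2\times\mathbb{Z}_2$ for even $n$ (giving four linear characters). Then I would construct the two-dimensional representations: for each integer $j$, set $\rho_j(a)=\mathrm{diag}(\zeta^j,\zeta^{-j})$ with $\zeta=e^{2\pi i/n}$ and $\rho_j(b)=\left(\begin{smallmatrix}0&1\\1&0\end{smallmatrix}\right)$. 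A direct check confirms these matrices satisfy the defining relations, so each $\rho_j$ is a genuine representation, and its character values are $\chi_j(a^k)=\zeta^{jk}+\zeta^{-jk}=2\cos(2\pi jk/n)$ and $\chi_j(a^kb)=0$, the latter because $\rho_j(a^kb)$ is off-diagonal.

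The two remaining points to settle are irreducibility and completeness. For irreducibility I would note that when $\zeta^j\neq\zeta^{-j}$ the matrices $\rho_j(a)$ and $\rho_j(b)$ share no common eigenvector, so no proper invariant subspace exists; alternatively one verifies $\langle\chi_j,\chi_j\rangle=1$ via the orthogonality relations. Since $\rho_j$ and $\rho_{n-j}$ are equivalent (conjugate by the swap matrix), the inequivalent two-dimensional representations are indexed by $1\le j\le(n-1)/2$ when $n$ is odd and $1\le j\le n/2-1$ when $n$ is even. Completeness then follows from the dimension identity $\sum d_i^2=|G|$: in the odd case $2\cdot 1^2+\tfrac{n-1}{2}\cdot 2^2=2n$, and in the even case $4\cdot 1^2+(\tfrac{n}{2}-1)\cdot 2^2=2n$, each matching $|D_{2n}|=2n$ and confirming that no irreducible representation has been omitted.

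The main obstacle I anticipate is bookkeeping rather than conceptual: one must track the parity of $n$ consistently across the conjugacy-class count, the number of linear characters, and the admissible range of $j$, since an off-by-one error in any of these would break the dimension identity. Once the two parity cases are handled carefully, assembling the entries of the character table is immediate from the character values computed above.
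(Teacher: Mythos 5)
The paper never proves this lemma: it is imported as a known result with a citation to Serre's book \cite{Jean}, so there is no internal argument to compare yours against. Your proposal supplies the standard self-contained derivation, and it is essentially correct: the matrices $\rho_j(a)=\mathrm{diag}(\zeta^j,\zeta^{-j})$, $\rho_j(b)=\left(\begin{smallmatrix}0&1\\1&0\end{smallmatrix}\right)$ do satisfy the defining relations, give characters $2\cos(2\pi jk/n)$ on rotations and $0$ on reflections, and are irreducible because $\rho_j(a)$ has distinct eigenvalues whose eigenlines are swapped by $\rho_j(b)$; the computation $[D_{2n},D_{2n}]=\langle a^2\rangle$ correctly yields two linear characters for odd $n$ and four for even $n$, matching $\psi_1,\ldots,\psi_4$ in Tab.~\ref{tab-1}, and your index ranges agree with the paper's bound $1\le h\le\big[\frac{n-1}{2}\big]$ in both parities. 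What your argument buys over the paper's citation is verifiability: the paper's later computations (Corollary~\ref{cor-2-1}, Lemma~\ref{lem-3-3}) lean on the exact entries of this table, and your derivation checks them. One step you should tighten: the completeness argument via $\sum_i d_i^2=|G|$ (and likewise the alternative via counting conjugacy classes) requires that the listed representations be \emph{pairwise} inequivalent, but you only justify the equivalence $\rho_j\sim\rho_{n-j}$; you must also note that $\rho_j\not\sim\rho_{j'}$ for distinct $j,j'$ in the admissible range. This is a one-line fill---$\chi_j(a)=2\cos(2\pi j/n)$ takes distinct values for $1\le j\le\lfloor n/2\rfloor$ because cosine is injective on $[0,\pi]$, and equivalent representations have equal characters---but without it the dimension identity could in principle be double-counting a single equivalence class, so the sentence belongs in the proof.
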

\begin{table}[t]\small{
\caption{\label{tab-1}\small{Character table of $D_{2n}$ ($1\le h\le [\frac{n-1}{2}]$).}}
\begin{tabular*}{15cm}{@{\extracolsep{\fill}}cccccc}
\toprule
$n$ is odd &$a^k$&$ba^k$&$n$ is even&$a^k$&$ba^k$\\
 \midrule
$\psi_1$&$1$& $1$&$\psi_1$&$1$& $1$\\
  $\psi_2$&$1$& $-1$&$\psi_2$&$1$& $-1$\\
  $\chi_{h}$&$2\cos{\frac{2\pi hk}{n}}$&$0$&$\psi_3$&$(-1)^k$& $(-1)^k$\\
  --&--&--&$\psi_4$&$(-1)^k$& $(-1)^{k+1}$\\
  --&--&--&$\chi_{h}$&$2\cos{\frac{2\pi hk}{n}}$& 0\\
  \bottomrule
\end{tabular*}}
\end{table}

By Lemma \ref{lem-2-3} and  Lemma \ref{lem-2-5}, we obtain the spectra of Cayley graphs on  $D_{2n}$ immediately.
\begin{thm}\label{thm-2-1}
Let $D_{2n}=\langle a,b\mid a^n=b^2=1,bab=a^{-1}\rangle$ be the dihedral group of order $2n$, and let $S$ be a symmetric subset of $D_{2n}$ such that $1\not\in S$.  Then the Cayley graph $X(D_{2n},S)$ has spectrum $$\mathrm{Spec}(X(D_{2n},S))=\Big\{[\lambda_i]^1;[\mu_{h1}]^2, [\mu_{h2}]^2 \mid 1\leq i\leq 3+(-1)^n; 1\leq h\leq \big[\frac{n-1}{2}\big]\Big\},$$
 where $\lambda_i=\sum_{s\in S}\psi_{i}(s)$ for $1\leq i\leq 3+(-1)^n$ and
$$
\left\{\begin{array}{l}
\mu_{h1}+\mu_{h2}=\sum_{s\in S}\chi_{h}(s)\\
\mu_{h1}^2+\mu_{h2}^2=\sum_{s_1,s_2\in S}\chi_{h}(s_1s_2)
\end{array}\right.
$$
for $1\leq h\leq [\frac{n-1}{2}]$.
\end{thm}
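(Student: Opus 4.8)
The plan is to specialize Babai's spectral formula (Lemma \ref{lem-2-3}) to the group $G=D_{2n}$, feeding into it the explicit character data recorded in Lemma \ref{lem-2-5}. The first step is to read off from Table \ref{tab-1} the complete list of non-equivalent irreducible representations of $D_{2n}$ together with their degrees. When $n$ is odd there are two linear characters $\psi_1,\psi_2$ and $\frac{n-1}{2}$ two-dimensional characters $\chi_h$; when $n$ is even there are four linear characters $\psi_1,\psi_2,\psi_3,\psi_4$ and $\frac{n-2}{2}$ two-dimensional characters $\chi_h$. In both cases the number of linear characters equals $3+(-1)^n$, and the index of the two-dimensional characters runs over $1\leq h\leq[\frac{n-1}{2}]$; a quick check that the degrees satisfy $d_1^2+\cdots+d_h^2=2n$ confirms that the list is exhaustive.

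Next I would apply Lemma \ref{lem-2-3} representation by representation, using the fact recorded there that each irreducible representation of degree $d$ contributes exactly $d$ eigenvalues to the spectrum, each occurring with multiplicity $d$. For a linear character $\psi_i$ the degree is $d=1$, so it contributes a single eigenvalue of multiplicity one; taking $t=1$ in the power-sum identity of Lemma \ref{lem-2-3} gives $\lambda_i=\sum_{s\in S}\chi_{\psi_i}(s)=\sum_{s\in S}\psi_i(s)$, which is exactly the stated value. This accounts for the eigenvalues $[\lambda_i]^1$ with $1\leq i\leq 3+(-1)^n$.

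For a two-dimensional character $\chi_h$ the degree is $d=2$, so it contributes two eigenvalues $\mu_{h1},\mu_{h2}$, each of multiplicity two. Here I would invoke Lemma \ref{lem-2-3} with $t=1$ and $t=2$: the case $t=1$ yields the first power sum $\mu_{h1}+\mu_{h2}=\sum_{s\in S}\chi_h(s)$, while the case $t=2$ yields the second power sum $\mu_{h1}^2+\mu_{h2}^2=\sum_{s_1,s_2\in S}\chi_h(s_1s_2)$; these are precisely the two displayed equations. Collecting the contributions of all irreducible representations, and verifying that the resulting multiplicities sum to $2n$, completes the description of $\mathrm{Spec}(X(D_{2n},S))$.

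The argument is essentially a bookkeeping exercise once the character table is in hand, so there is no deep obstacle. The only points requiring care are the uniform treatment of the two parities of $n$, which is conveniently packaged by the quantity $3+(-1)^n$ counting the linear characters, and the observation that, for a two-dimensional representation, the two power-sum identities for $t=1,2$ already determine the unordered pair $\{\mu_{h1},\mu_{h2}\}$ through Newton's identities, so no higher values of $t$ are needed to pin down the eigenvalues.
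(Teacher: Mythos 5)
Your proposal is correct and follows exactly the route the paper intends: the paper states Theorem \ref{thm-2-1} as an immediate consequence of Babai's formula (Lemma \ref{lem-2-3}) combined with the character table of $D_{2n}$ (Lemma \ref{lem-2-5}), offering no further proof. Your write-up simply makes the bookkeeping explicit --- the degree count $d_1^2+\cdots+d_h^2=2n$, the $3+(-1)^n$ linear characters, and the $t=1,2$ power sums determining the pair $\{\mu_{h1},\mu_{h2}\}$ --- all of which is sound.
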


Now we focus on  the cubic Cayley graph $X(D_{2n},S)$, where $|S|=3$. Since $X(D_{2n},S)$ is cubic, we can suppose that $S=\{a^k,a^{-k},ba^i\}$ ($k\neq \frac{n}{2}$ if $n$ is even), or $S=\{ba^{k_1},ba^{k_2},ba^{k_3}\}$, or $S=\{a^{\frac{n}{2}},ba^{i},ba^{j}\}$ ($n$ is even). Then $X(D_{2n},S)$ is said to be of \emph{type-I}, \emph{type-II} and \emph{type-III}  if $S$ possesses the form $\{a^k,a^{-k},ba^i\}$, $\{ba^{k_1},ba^{k_2},ba^{k_3}\}$  and  $\{a^{\frac{n}{2}},ba^{k_1},ba^{k_2}\}$, respectively. From Lemma \ref{lem-2-5} and Theorem \ref{thm-2-1}, we deduce the spectrum of the cubic Cayley graph $X(D_{2n},S)$ by simple computation.
\begin{cor}\label{cor-2-1}
Let $X(D_{2n},S)$ be the cubic Cayley graph on $D_{2n}$ with respect to $S$.
\begin{enumerate}[(1)]
\item If $X(D_{2n},S)$ is of type-I, i.e.,  $S=\{a^k,a^{-k},ba^i\}$, then $X(D_{2n},S)$ has spectrum
    $$\mathrm{Spec}(X(D_{2n},S))=\left\{\begin{array}{ll}
\big\{[3]^1,[1]^1;[2\cos{\frac{2 hk\pi}{n}}\pm1]^2\mid 1\leq h\leq [\frac{n-1}{2}]\big\}& \mbox{if $n$ is odd},\\
\big\{[3]^1,[1]^1,\lambda_3,\lambda_4;[2\cos{\frac{2 hk\pi}{n}}\pm1]^2\mid 1\leq h\leq [\frac{n-1}{2}]\big\}& \mbox{if $n$ is even},
\end{array}\right.
    $$
    where $\lambda_3=2\cdot(-1)^k+(-1)^i$ and $\lambda_4=2\cdot(-1)^k-(-1)^i$.

\item If $X(D_{2n},S)$ is of type-II, i.e.,  $S=\{ba^{k_1},ba^{k_2},ba^{k_3}\}$, then $X(D_{2n},S)$ has spectrum
    $$\mathrm{Spec}(X(D_{2n},S))=\left\{\begin{array}{ll}
\big\{[3]^1,[-3]^1;[\pm \sqrt{a_h(S)}]^2\mid 1\leq h\leq [\frac{n-1}{2}]\big\}& \mbox{if $n$ is odd},\\
\big\{[3]^1,[-3]^1,\lambda_3,\lambda_4;[\pm \sqrt{a_h(S)}]^2\mid 1\leq h\leq [\frac{n-1}{2}]\big\}& \mbox{if $n$ is even},\\
\end{array}\right.
$$
where $\lambda_3,\lambda_4=\pm\big[(-1)^{k_1}+(-1)^{k_2}+(-1)^{k_3}\big]$ and $a_h(S)=3+2\big[\cos\frac{2 h(k_1-k_2)\pi}{n}+\cos\frac{2 h(k_1- k_3)\pi}{n}+\cos\frac{2 h(k_2- k_3)\pi}{n}\big]$.

\item If $X(D_{2n},S)$ is of type-III, i.e.,  $S=\{a^{\frac{n}{2}},ba^{i},ba^{j}\}$, then $X(D_{2n},S)$ has spectrum
    $$\begin{array}{l}
        \mathrm{Spec}(X(D_{2n},S))=
\big\{[3]^1,[-1]^1,\lambda_3,\lambda_4;[(-1)^h\pm\sqrt{2\cos\frac{2 h(i-j)\pi}{n}+2}]^2\mid 1\leq h\leq [\frac{n-1}{2}]\big\}
    \end{array}
$$
where $\lambda_3=(-1)^{\frac{n}{2}}+(-1)^{i}+(-1)^{j}$ and $\lambda_4=(-1)^{\frac{n}{2}}+(-1)^{i}-(-1)^{j}$.
\end{enumerate}
 \end{cor}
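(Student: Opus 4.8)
The plan is to treat Corollary \ref{cor-2-1} as a direct computation built on Theorem \ref{thm-2-1} together with the character table in Table \ref{tab-1}. For each of the three types of generating set $S$, the one-dimensional characters $\psi_i$ contribute the simple eigenvalues $\lambda_i=\sum_{s\in S}\psi_i(s)$, while each degree-two character $\chi_h$ contributes a conjugate pair $\mu_{h1},\mu_{h2}$ determined by the two symmetric-function identities in Theorem \ref{thm-2-1}. So the whole statement reduces to (i) reading off character values from Table \ref{tab-1}, and (ii) solving, for each $h$, a quadratic whose coefficients are $\mu_{h1}+\mu_{h2}$ and $\mu_{h1}\mu_{h2}$.

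First I would handle the simple eigenvalues. Substituting the values of $\psi_1,\psi_2$ (and, when $n$ is even, $\psi_3,\psi_4$) at the specific elements of $S$ gives the listed $\lambda_i$ immediately: for type-I, $\psi_1,\psi_2$ evaluate to $1+1+1=3$ and $1+1-1=1$, and the even-$n$ values $\psi_3,\psi_4$ at $\{a^k,a^{-k},ba^i\}$ produce $2(-1)^k\pm(-1)^i$; the type-II and type-III simple eigenvalues arise in exactly the same way, using $\psi_3(ba^k)=(-1)^k$, $\psi_4(ba^k)=(-1)^{k+1}$ and $\chi_h(a^{n/2})=2(-1)^h$.

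The crux is the two-dimensional part. Since $\chi_h$ vanishes on every reflection $ba^k$ and equals $2\cos(2\pi hk/n)$ on $a^k$, the first identity $\mu_{h1}+\mu_{h2}=\sum_{s\in S}\chi_h(s)$ is immediate in each case: it is $4\cos(2\pi hk/n)$ for type-I, $0$ for type-II, and $2(-1)^h$ for type-III. The second identity requires computing $\sum_{s_1,s_2\in S}\chi_h(s_1s_2)$, and here the main work is the group arithmetic in $D_{2n}$: using $a^kb=ba^{-k}$ one sorts the nine products $s_1s_2$ into those lying in $\langle a\rangle$ (where $\chi_h$ is a cosine) and those lying in the reflection coset $b\langle a\rangle$ (where $\chi_h=0$). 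For instance, in type-II every product $ba^{k_i}\cdot ba^{k_j}=a^{k_j-k_i}$ lands in $\langle a\rangle$, yielding $\sum_{s_1,s_2}\chi_h(s_1s_2)=2a_h(S)$. After collecting terms, I would apply $\cos 2\theta=2\cos^2\theta-1$ and the evenness of cosine to reach a clean value of $\mu_{h1}^2+\mu_{h2}^2$, then recover $\mu_{h1}\mu_{h2}$ from $(\mu_{h1}+\mu_{h2})^2-(\mu_{h1}^2+\mu_{h2}^2)=2\mu_{h1}\mu_{h2}$ and solve the quadratic $x^2-(\mu_{h1}+\mu_{h2})x+\mu_{h1}\mu_{h2}=0$. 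In each type the discriminant collapses to a clean form: $4$ for type-I (giving $2\cos(2\pi hk/n)\pm1$), $4a_h(S)$ for type-II (giving $\pm\sqrt{a_h(S)}$), and $8(1+\cos(2\pi h(i-j)/n))$ for type-III (giving $(-1)^h\pm\sqrt{2\cos(2\pi h(i-j)/n)+2}$).

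The only genuinely delicate step is the bookkeeping of the nine products $s_1s_2$ together with the half-angle simplifications; everything else is routine substitution. I expect the main obstacle to be keeping signs and factor-of-two conventions straight when converting $\cos(4\pi hk/n)$ and the cross terms back into expressions in $\cos(2\pi hk/n)$, but no idea beyond Theorem \ref{thm-2-1} and Table \ref{tab-1} is needed.
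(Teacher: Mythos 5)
Your proposal is correct and follows essentially the same route as the paper: read off the one-dimensional characters for the simple eigenvalues, compute the power sums $\mu_{h1}+\mu_{h2}$ and $\mu_{h1}^2+\mu_{h2}^2$ from Theorem \ref{thm-2-1} and Table \ref{tab-1} by sorting the products $s_1s_2$ into $\langle a\rangle$ versus $b\langle a\rangle$, and solve the resulting quadratic for each $h$ (your discriminants $4$, $4a_h(S)$, and $8\bigl(1+\cos\frac{2h(i-j)\pi}{n}\bigr)$ all check out). If anything, your plan is more complete than the paper's written proof, which carries out only the type-I and type-II computations for odd $n$ and dismisses the even-$n$ and type-III cases as ``similar.''
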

\begin{proof}
We only consider the case that $n$ is odd, since the computation is similar when $n$ is even. If $X(D_{2n},S)$ is of type-I, by Lemma \ref{lem-2-5} and Theorem \ref{thm-2-1} we obtain that $\lambda_1=\psi_{1}(a^k)+\psi_{1}(a^{-k})+\psi_{1}(ba^i)=3$, $\lambda_2=\psi_{2}(a^k)+\psi_{2}(a^{-k})+\psi_{2}(ba^i)=1$, and for $1\leq h\leq [\frac{n-1}{2}]$,
$$
\left\{
\begin{array}{ll}
\mu_{h1}+\mu_{h2}=\chi_{h}(a^k)+\chi_{h}(a^{-k})+\chi_{h}(ba^i)=4\cos{\frac{2hk\pi}{n}},\\
\mu_{h1}^2+\mu_{h2}^2=\chi_{h}(a^{2k})+\chi_{h}(a^{-2k})+3\chi_{h}(a^0)=4\cos{\frac{4hk\pi}{n}}+6.\\
\end{array}
\right.
$$
Therefore, we have $\mu_{h1},\mu_{h2}=2\cos{\frac{2hk\pi}{n}}\pm1$.

Similarly, if $X(D_{2n},S)$ is of type-II, then $\lambda_1=\psi_{1}(ba^{k_1})+\psi_{1}(ba^{k_2})+\psi_{1}(ba^{k_3})=3$,
$\lambda_2=\psi_{2}(ba^{k_1})+\psi_{2}(ba^{k_2})+\psi_{2}(ba^{k_3})=-3$, and for $1\leq h\leq [\frac{n-1}{2}]$,
$$
\left\{
\begin{array}{ll}
\mu_{h1}+\mu_{h2}=\chi_{h}(ba^{k_1})+\chi_{h}(ba^{k_2})+\chi_{h}(ba^{k_3})=0,\\
\mu_{h1}^2+\mu_{h2}^2=\sum_{1\le i,j\le 3}\chi_{h}(a^{k_i-k_j})=4\big[\cos{\frac{2h(k_1-k_2)}{n}}+\cos{\frac{2h(k_1-k_3)}{n}}+\cos{\frac{2h(k_2-k_3)}{n}}\big]+6.
\end{array}
\right.
$$
Thus $\mu_{h1},\mu_{h2}=\pm\sqrt{a_h(S)}$, where $a_h(S)=3+2\big[\cos{\frac{2h(k_1-k_2)\pi}{n}}+\cos{\frac{2h(k_1-k_3)\pi}{n}}+\cos{\frac{2h(k_2-k_3)\pi}{n}}\big]$.

We complete this proof.
\end{proof}
The following lemma gives the necessary and sufficient condition for the cubic Cayley graph $X(D_{2n},S)$ to be connected.
\begin{lem}\label{lem-2-6}
Let $X(D_{2n},S)$ be the cubic Cayley graph on $D_{2n}$ with respect to $S$.
\begin{enumerate}[(1)]
\item If $X(D_{2n},S)$ is of type-I, i.e.,  $S=\{a^k,a^{-k},ba^i\}$, then $X(D_{2n},S)$ is connected if and only if
$(k,n)=1$.
\vspace{-0.25cm}
\item If $X(D_{2n},S)$ is of type-II, i.e.,  $S=\{ba^{k_1},ba^{k_2},ba^{k_3}\}$, then $X(D_{2n},S)$ is connected if and only if
$(k_1-k_2,k_1-k_3,k_2-k_3,n)=1$.
\vspace{-0.25cm}
\item If $X(D_{2n},S)$ is of type-III, i.e.,  $S=\{a^{\frac{n}{2}},ba^{i},ba^{j}\}$, then $X(D_{2n},S)$ is connected if and only if
$(i-j,\frac{n}{2})=1$.
\end{enumerate}
\end{lem}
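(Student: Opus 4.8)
The plan is to reduce the whole lemma to the standard fact that a Cayley graph $X(G,S)$ is connected if and only if its connection set generates the group, that is, $\langle S\rangle=G$. Thus in each of the three cases it suffices to compute the subgroup $\langle S\rangle\le D_{2n}$ and to pin down exactly when it fills out all of $D_{2n}$; since $|D_{2n}|=2n$, this amounts to showing that $|\langle S\rangle|=2n$ precisely under the stated arithmetic condition. Throughout I would repeatedly use the defining relations $bab^{-1}=a^{-1}$ and $b^2=1$ (so that $a^mb=ba^{-m}$), together with the elementary fact that inside the cyclic group $\langle a\rangle\cong\mathbb{Z}_n$ the subgroup generated by powers $a^{m_1},\dots,a^{m_r}$ equals $\langle a^{d}\rangle$ with $d=(m_1,\dots,m_r,n)$, a cyclic group of order $n/d$.

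For type-I, with $S=\{a^k,a^{-k},ba^i\}$, I would first note that $\langle a^k,a^{-k}\rangle=\langle a^{(k,n)}\rangle$. Then a one-line computation using $ba^mb=a^{-m}$ shows that conjugation by the reflection $ba^i$ sends $a^{(k,n)}$ to its inverse, so this rotation subgroup is normalized by $ba^i$ and $\langle S\rangle=\langle a^{(k,n)}\rangle\cup\langle a^{(k,n)}\rangle ba^i$ has order $2n/(k,n)$. Hence $\langle S\rangle=D_{2n}$ exactly when $(k,n)=1$.

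For type-II, with $S=\{ba^{k_1},ba^{k_2},ba^{k_3}\}$, the key step is the identity that the product of any two reflections is a rotation, $(ba^{k_s})(ba^{k_t})=a^{k_t-k_s}$. Consequently $\langle S\rangle$ contains the rotation subgroup generated by $a^{k_1-k_2},a^{k_1-k_3},a^{k_2-k_3}$, which by the cyclic-group fact equals $\langle a^d\rangle$ with $d=(k_1-k_2,k_1-k_3,k_2-k_3,n)$. Since each element of $S$ equals $ba^{k_1}$ times such a rotation, one gets $\langle S\rangle=\langle a^d,ba^{k_1}\rangle$ of order $2n/d$, so connectivity is equivalent to $d=1$.

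For type-III, with $S=\{a^{n/2},ba^i,ba^j\}$ ($n$ even), the same reflection-product identity gives $(ba^i)(ba^j)=a^{j-i}$, so $\langle S\rangle$ contains $\langle a^{n/2},a^{j-i}\rangle=\langle a^{(j-i,\,n/2,\,n)}\rangle$. The point that requires care, and which I expect to be the only genuinely subtle step, is that $(n/2,n)=n/2$, so this gcd collapses to $(i-j,n/2)$ rather than $(i-j,n)$; this is exactly how the extra central involution $a^{n/2}$ alters the criterion relative to the other types. Writing $d=(i-j,n/2)$ one then finds $\langle S\rangle=\langle a^d,ba^i\rangle$ of order $2n/d$, whence $X(D_{2n},S)$ is connected if and only if $(i-j,n/2)=1$. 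The type-I and type-II computations are routine once the generation principle and the reflection-product identity are in hand.
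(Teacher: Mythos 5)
Your proposal is correct and follows essentially the same route as the paper: reduce connectivity to $\langle S\rangle=D_{2n}$ and use the reflection-product identity $ba^{k_i}\cdot ba^{k_j}=a^{k_j-k_i}$ to identify the rotation subgroup $\langle a^d\rangle$ generated by the differences. The only difference is one of completeness, not of method: the paper declares parts (1) and (3) obvious and argues (2) by containment, whereas you carry out the same computation uniformly in all three cases and pin down $\langle S\rangle$ explicitly as a dihedral subgroup of order $2n/d$.
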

\begin{proof}
(1) and $(3)$ are obvious. Now we prove (2). Suppose that $X(D_{2n},S)$ is connected. Then $\langle S \rangle=D_{2n}$ and so $S$ can generate $\langle a \rangle$. Note that $S=\{ba^{k_1},ba^{k_2},ba^{k_3}\}$ and $ba^{k_i}\cdot ba^{k_j}=a^{k_j-k_i}$, we claim that $(k_1-k_2,k_1-k_3,k_2-k_3,n)=1$. Otherwise, if $(k_1-k_2,k_1-k_3,k_2-k_3,n)=d\neq 1$, then it is easy to see that $\langle S \rangle\cap \langle a\rangle\subseteq\langle a^d \rangle$, which implies that $S$ cannot generate $\langle a \rangle$ because $\langle a^d \rangle\neq \langle a \rangle$ due to $d\neq 1$ and $d\mid n$.  Conversely, if $(k_1-k_2,k_1-k_3,k_2-k_3,n)=1$, then $\{a^{k_1-k_2}, a^{k_2-k_3},a^{k_1-k_3}\}$ can generates $\langle a\rangle$, and thus $\langle S\rangle=D_{2n}$.

The proof is now complete.
\end{proof}
Suppose that $X(D_{2n},S)$ is a connected Cayley graph of type-I with $S=\{a^k,a^{-k},ba^i\}$.  If we put $V(X(D_{2n},S))=V_1\cup V_2$, where $V_1=\langle a\rangle=\{1,a,a^2,\ldots,a^{n-1}\}$ and $V_2=b\langle a\rangle=\{b,ba,ba^2,\ldots,ba^{n-1}\}$,  it is seen that $V_1$ and $V_2$ induce two disjoint cycles of length $n$, which are generated by the two elements $a^k,a^{-k}\in S$ because $(k,n)=1$. Moreover,   the element $ba^i\in S$ connects $a^{j}$ to $ba^{i+j}$ and so generates a perfect matching between $V_1$ and $V_2$. Thus we obtain the following result.

\begin{thm}\label{thm-2-2}
Every connected Cayley graph of type-I is isomorphic to $C_n\square K_2$.
\end{thm}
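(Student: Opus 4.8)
The plan is to realize the graph explicitly as two disjoint $n$-cycles joined by a perfect matching, exactly as indicated in the paragraph preceding the statement, and then to exhibit a concrete isomorphism onto the prism $C_n\square K_2$. I would identify vertices through $a^j\leftrightarrow j$ on $V_1=\langle a\rangle$ and $ba^j\leftrightarrow j$ on $V_2=b\langle a\rangle$, with $j\in\mathbb{Z}_n$. Using the Cayley adjacency $h=sg$ with $s\in S=\{a^k,a^{-k},ba^i\}$, I would record the three adjacency rules on these coordinates: inside $V_1$ one has $a^j\sim a^{j\pm k}$; inside $V_2$ one has $ba^j\sim ba^{j\pm k}$ (because $a^kb=ba^{-k}$); and across the two parts $a^j\sim ba^{i+j}$ (because $ba^ib=a^{-i}$).

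Next I would invoke the connectivity hypothesis $(k,n)=1$ from Lemma \ref{lem-2-6}(1). Since $k$ generates $\mathbb{Z}_n$, the rule $j\sim j\pm k$ on each of $V_1$ and $V_2$ produces a \emph{single} cycle of length $n$ rather than several shorter cycles, so the subgraphs induced on $V_1$ and $V_2$ are each isomorphic to $C_n$. The cross edges $a^j\sim ba^{i+j}$ visibly form a perfect matching between $V_1$ and $V_2$, since $j\mapsto i+j$ is a bijection of $\mathbb{Z}_n$. Hence the graph is two $n$-cycles together with a perfect matching.

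To pin down that this is precisely $C_n\square K_2$, I would reparametrize both cycles by powers of $a^k$: set $\phi(a^{sk})=(s,0)$ and $\phi(ba^{sk})=(s-c,1)$ for $s\in\mathbb{Z}_n$, where $c\equiv ik^{-1}\pmod n$ (the inverse exists because $(k,n)=1$), and where $C_n\square K_2$ has vertex set $\mathbb{Z}_n\times\{0,1\}$ with $(x,\epsilon)\sim(x\pm1,\epsilon)$ and $(x,0)\sim(x,1)$. Under $\phi$ the two cycle families map to the two copies of $C_n$, and the matching edge $a^{sk}\sim ba^{i+sk}=ba^{(s+c)k}$ maps to $(s,0)\sim(s,1)$, i.e. a rung of the prism. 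I would then verify that $\phi$ is a bijection respecting all three adjacency types, which completes the isomorphism.

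The main obstacle is this last point: confirming that the matching realizes the \emph{untwisted} prism rather than a M\"obius-type twisting. The decisive observation is that both cycles inherit the \emph{same} orientation from the single generator $a^k$, so that in the reparametrized coordinates the matching becomes the uniform cyclic shift $s\mapsto s+c$; such a shift is absorbed by rotating one fibre (the $-c$ in the second coordinate of $\phi$), which is an automorphism of $C_n$. Once this uniformity is verified, no twisting can occur and the identification with $C_n\square K_2$ is forced.
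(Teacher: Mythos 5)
Your proof is correct, and its skeleton is the same as the paper's: split the vertex set into $V_1=\langle a\rangle$ and $V_2=b\langle a\rangle$, use $(k,n)=1$ to see that each part induces a single $n$-cycle generated by $a^{\pm k}$, and observe that $ba^i$ yields a perfect matching between the parts. Where you go beyond the paper is the final step. The paper stops at ``two disjoint $n$-cycles plus a perfect matching'' and immediately concludes that the graph is $C_n\square K_2$; that inference is not automatic, since two $n$-cycles joined by a perfect matching need not be a prism (the Petersen graph is two $5$-cycles plus a perfect matching, yet it is not $C_5\square K_2$). Your reparametrization by powers of $a^k$, namely $\phi(a^{sk})=(s,0)$ and $\phi(ba^{sk})=(s-c,1)$ with $c\equiv ik^{-1}\pmod{n}$, together with the verification that every matching edge $a^{sk}\sim ba^{(s+c)k}$ becomes a rung $(s,0)\sim(s,1)$, is precisely what rules out the twisted possibility: both cycles are stepped by the same generator, the matching is the uniform shift $s\mapsto s+c$, and that shift is absorbed by rotating one fibre. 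So your argument reproduces the paper's decomposition but also closes a genuine (if small) gap in its exposition, by exhibiting the isomorphism explicitly rather than asserting it.
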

Let $n$ be an odd number. We see that each cubic Cayley graph on $D_{2n}$ can only  be of type-I or type-II. According to Corollary \ref{cor-2-1},  we claim that those cubic   Cayley graphs on $D_{2n}$ ($n$ is odd) of different types  cannot be cospectral, and so cannot be isomorphic.
\begin{thm}\label{thm-2-3}
Let $n$ be an odd number. Then a  cubic Cayley graph on $D_{2n}$  of type-I cannot be isomorphic to a cubic Cayley graph on $D_{2n}$ of type-II.
\end{thm}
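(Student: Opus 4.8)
The plan is to distinguish the two types purely by their spectra, since isomorphic graphs are necessarily cospectral; thus it suffices to prove that a type-I graph and a type-II graph on $D_{2n}$ (with $n$ odd) can never be cospectral. The single spectral invariant I would exploit is whether $-3$ occurs as an eigenvalue.

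First I would record that every type-II graph has $-3$ in its spectrum. This is immediate from Corollary~\ref{cor-2-1}(2), where $\lambda_2=-3$ appears as an eigenvalue. Conceptually it reflects the fact that a type-II connection set $\{ba^{k_1},ba^{k_2},ba^{k_3}\}$ consists entirely of reflections, so every edge joins the two cosets $\langle a\rangle$ and $b\langle a\rangle$; the graph is therefore bipartite, and a cubic bipartite graph always attains the least eigenvalue $-3$.

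Next I would show that no type-I graph with $n$ odd has $-3$ as an eigenvalue. By Corollary~\ref{cor-2-1}(1) its eigenvalues are $3$, $1$, together with the numbers $2\cos\frac{2hk\pi}{n}\pm1$ for $1\le h\le[\frac{n-1}{2}]$; the smallest of these candidates is $2\cos\frac{2hk\pi}{n}-1$, which equals $-3$ precisely when $\cos\frac{2hk\pi}{n}=-1$, that is, when $2hk$ is an odd multiple of $n$. Since $2hk$ is even while any odd multiple of the odd number $n$ is odd, this is impossible, so every type-I eigenvalue strictly exceeds $-3$.

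Combining the two observations, a type-I graph satisfies $-3\notin\mathrm{Spec}$ whereas a type-II graph satisfies $-3\in\mathrm{Spec}$, so the two cannot be cospectral and hence cannot be isomorphic. I do not expect any serious obstacle here; the only point requiring care — and the only place where the hypothesis that $n$ is odd enters — is the parity argument ruling out $\cos\frac{2hk\pi}{n}=-1$. For even $n$ the value $-1$ can be reached (take $2hk=n$), type-I graphs may then be bipartite, and the dichotomy breaks down, so the restriction to odd $n$ is genuinely used.
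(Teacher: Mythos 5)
Your proposal is correct and takes essentially the same approach as the paper: the paper also proves Theorem~\ref{thm-2-3} by invoking Corollary~\ref{cor-2-1} to assert that type-I and type-II graphs on $D_{2n}$ ($n$ odd) cannot be cospectral, hence cannot be isomorphic. In fact the paper leaves that spectral comparison as an unproved claim, and your $-3$-eigenvalue criterion together with the parity argument (that $2hk$ is even while an odd multiple of the odd number $n$ is odd) is precisely the detail needed to substantiate it.
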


\section{Isomorphic classes  of  cubic Cayley graphs on $D_{2p}$}
In this section, we focus on determining the isomorphic classes of cubic Cayley graphs on the dihedral group $D_{2p}$, where $p$ is an odd prime.

Let $G$ be a group, and let $X(G,S)$ be the Cayley graph on $G$ with respect to $S$.  For any $\sigma\in \mathrm{Aut}(G)$, it is well known that $\sigma$ induces an isomorphism $\Phi_\sigma$ from $X(G,S)$  to $X(G,\sigma(S))$, where $\Phi_\sigma$ is defined by $\Phi_\sigma(g)=\sigma(g)$ for $g\in G$. Such an isomorphism $\Phi_\sigma$ is the  so-called  Cayley isomorphism.

Let $D_{2n}=\langle a,b\mid a^n=b^2=1,bab=a^{-1}\rangle$ be the dihedral group of order $2n$. Then for any $\lambda\in \mathbb{Z}_n^*=\{\lambda\in \mathbb{Z}_n\mid (\lambda,n)=1\}=\mathrm{Aut}(\mathbb{Z}_n)$ and $k\in\mathbb{Z}_n$, we define $\sigma_{\lambda,k}:D_{2n}\longrightarrow D_{2n}$ by setting $\sigma_{\lambda,k}(a^i)=a^{\lambda i}$ and $\sigma_{\lambda,k}(ba^j)=ba^{\lambda j+k}$, where $i,j\in \mathbb{Z}_n$. It is easy to verify that $\sigma_{\lambda,k}\in\mathrm{Aut}(D_{2n})$. Actually, Rotmaler in \cite{Rotmaler} proved that each automorphism of $D_{2n}$ ($n\geq 3$) has this form, or equivalently, $\mathrm{Aut}(D_{2n})=\{\sigma_{\lambda,k}\mid \lambda\in \mathbb{Z}_n^*, k\in \mathbb{Z}_n\}$. From the above arguments, we obtain the following result.
\begin{lem}\label{lem-3-1}
Let $S$ be a subset of $D_{2n}$ satisfying $1\not\in S$ and $S=S^{-1}$. Then for any $\lambda\in \mathbb{Z}_n^*$ and $k\in\mathbb{Z}_n$, we have $X(D_{2n},S)\cong X(D_{2n},\sigma_{\lambda,k}(S))$.
\end{lem}

Let $S$, $T$ be two symmetric subsets of $D_{2n}$ such that $1\not\in S$ and $1\not\in T$. According to Lemma \ref{lem-3-1}, the subsets $S$ and $T$  are said to be \emph{equivalent}, denoted by $S\sim T$, if there exist  $\lambda\in \mathbb{Z}_n^*$ and $k\in \mathbb{Z}_n$ such that $T=\sigma_{\lambda,k}(S)$. It is easy to see that `$\sim$' defines an equivalence relation among the symmetric subsets of $D_{2n}$. The following example shows that there are exactly two isomorphic classes of connected cubic Cayley graphs on  $D_{10}$.

\begin{exam}\label{exam-1}
\emph{Let $D_{10}=\langle a,b\mid a^5=b^2=1,bab=a^{-1}\rangle$ and $S=\{b,ba,ba^2\}\subseteq b\langle a\rangle\subseteq D_{10}$. It is easy to verify that each symmetric subset $T$ ($1\not\in T$ and $|T|=3$) of $b\langle a\rangle$ is equivalent to $S$, which implies that all  connected  Cayley graphs on $D_{10}$ of type-II are isomorphic to $X(D_{10},S)$.  By Theorem \ref{thm-2-2} and Theorem \ref{thm-2-3}, there are exactly two connected cubic Cayley graphs on $D_{10}$ up to isomorphism.}
\end{exam}

Let $D_{2p}=\langle a,b\mid a^p=b^2=1,bab=a^{-1}\rangle$ be the dihedral group of order $2p$ ($p$ is an odd prime). By Lemma \ref{lem-2-6},  every cubic Cayley graphs on $D_{2p}$ is connected because $p$ is a prime. According to Theorem \ref{thm-2-2} and Theorem \ref{thm-2-3}, to determine the isomorphic classes of  cubic Cayley graphs on $D_{2p}$, it suffices to consider those graphs of type-II. For this purpose, we give some useful lemmas.

The following result due to Elspas and Turner \cite{Elspas} shows that every circulant graph of prime order is a CI-graph which is also Cay-DS, here we prefer to give a shorter proof because our result deals with multi-subsets.
\begin{lem}\label{lem-3-2}
Let $p$ be an odd prime, and $\mathbb{Z}_p$ the cyclic group of integers module $p$. Suppose that $S$, $T$ are two symmetric multi-subsets of $\mathbb{Z}_p$ such that $0\not\in S$ and $0\not\in T$. Then the following are equivalent:

\begin{enumerate}[(1)]
\vspace{-0.25cm}
\item $\mathrm{Spec}(X(\mathbb{Z}_p,S))=\mathrm{Spec}(X(\mathbb{Z}_p,T))$.
    \vspace{-0.25cm}
\item There exists some $k\in \mathbb{Z}_p^*$  such that $S=k T$.
    \vspace{-0.25cm}
\item $X(\mathbb{Z}_p,S)\cong X(\mathbb{Z}_p,T)$.
\end{enumerate}
\end{lem}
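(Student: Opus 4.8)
The plan is to establish the cycle of implications $(2)\Rightarrow(3)\Rightarrow(1)\Rightarrow(2)$, with essentially all the content in $(1)\Rightarrow(2)$. For $(2)\Rightarrow(3)$: if $S=kT$ for some $k\in\mathbb{Z}_p^*$, then the map $\sigma\colon x\mapsto kx$ is an automorphism of $\mathbb{Z}_p$ with $\sigma(T)=kT=S$, so the Cayley isomorphism induced by $\sigma$ (recalled in Section~\ref{s-1}) yields $X(\mathbb{Z}_p,T)\cong X(\mathbb{Z}_p,\sigma(T))=X(\mathbb{Z}_p,S)$; since this argument is insensitive to multiplicities, it applies verbatim to multi-subsets. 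The implication $(3)\Rightarrow(1)$ is immediate, as isomorphic (multi)graphs share the same adjacency spectrum.

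For $(1)\Rightarrow(2)$ I would encode $S$ and $T$ by their multiplicity functions $c,d\colon\mathbb{Z}_p\to\mathbb{Z}_{\ge 0}$, so that $c_0=d_0=0$ and $c_{-j}=c_j$, $d_{-j}=d_j$. Writing $\zeta=e^{2\pi i/p}$, Lemma~\ref{lem-2-4} expresses the eigenvalues as $\lambda_h^S=\sum_{j\in\mathbb{Z}_p}c_j\zeta^{hj}$, which are real by symmetry, and likewise for $T$. First note that $\lambda_0^S=|S|$ while $|\lambda_h^S|\le|S|$ for every $h$, so $|S|$ is the largest eigenvalue and cospectrality forces $|S|=|T|$. (If $S=\varnothing$ both graphs are edgeless and $(2)$ holds trivially, so assume $S\ne\varnothing$.) The idea is then to reconstruct $S$ from a single distinguished eigenvalue: consider $\lambda_1^S=\sum_{j}c_j\zeta^{j}\in\mathbb{Z}[\zeta]$, which equals $\sum_{j}c_j\cos(2\pi j/p)$. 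Since $0\notin S$, every cosine occurring in this sum is strictly less than $1$, so $\lambda_1^S<|S|=|T|=\lambda_0^T$. By $(1)$ the value $\lambda_1^S$ occurs in the spectrum of $X(\mathbb{Z}_p,T)$, say $\lambda_1^S=\lambda_h^T$, and the strict bound forces $h\in\mathbb{Z}_p^*$. Rewriting $\lambda_h^T=\sum_j d_j\zeta^{hj}=\sum_j d_{h^{-1}j}\zeta^{j}$, I obtain the relation $\sum_{j\in\mathbb{Z}_p}(c_j-d_{h^{-1}j})\zeta^{j}=0$ in $\mathbb{Q}(\zeta)$.

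The key input here — and the step I expect to be the main obstacle to phrase cleanly — is that the minimal polynomial of $\zeta$ over $\mathbb{Q}$ is $\Phi_p(x)=1+x+\cdots+x^{p-1}$, so the only $\mathbb{Q}$-linear relation among the $p$ powers $\{\zeta^{j}:j\in\mathbb{Z}_p\}$ is the multiples of $\sum_j\zeta^{j}=0$. Consequently $c_j-d_{h^{-1}j}$ is independent of $j$; evaluating at $j=0$ and using $c_0=d_0=0$ shows this constant is $0$, whence $c_j=d_{h^{-1}j}$ for all $j\in\mathbb{Z}_p$. This is precisely $S=hT$, so $(2)$ holds with $k=h\in\mathbb{Z}_p^*$, closing the cycle. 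I emphasize that the argument uses only the linear independence of cyclotomic powers and the matching of one distinguished eigenvalue, which is why it extends painlessly to multi-subsets; no appeal to quadratic reciprocity is required at this stage, that tool entering only later in the enumeration.
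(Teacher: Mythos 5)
Your proof is correct and follows essentially the same route as the paper's: the cycle $(2)\Rightarrow(3)\Rightarrow(1)\Rightarrow(2)$, with a Cayley isomorphism handling $(2)\Rightarrow(3)$ and the fact that $1+x+\cdots+x^{p-1}$ is the minimal polynomial of $\zeta=e^{2\pi i/p}$ (i.e., linear independence of the cyclotomic powers) driving $(1)\Rightarrow(2)$ by matching the eigenvalue $\lambda_1^S$ with some $\lambda_h^T$, $h\in\mathbb{Z}_p^*$, and comparing coefficients. If anything, your write-up is slightly more careful than the paper's at one point: the strict bound $\lambda_1^S<|S|=|T|$ justifying that the matching index $h$ lies in $\mathbb{Z}_p^*$ is asserted in the paper without comment.
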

\begin{proof}
$(1)\Rightarrow(2)$. Suppose that $\mathrm{Spec}(X(\mathbb{Z}_p,S))=\mathrm{Spec}(X(\mathbb{Z}_p,T))$. Then, by Lemma \ref{lem-2-4}, we obtain that
$$\Big\{\lambda_h=\sum_{s\in S}\omega^{hs}\mid 1\leq h\leq p\Big\}=\Big\{\mu_h=\sum_{t\in T}\omega^{ht}\mid 1\leq h\leq p\Big\},$$
where $\omega=e^{\frac{2\pi}{p}i}$.
Therefore, there exists  some $k$ ($1\le k\le p-1$) such that
$$\sum_{s\in S}\omega^{s}=\lambda_1=\mu_k=\sum_{t\in T}\omega^{kt} \mbox{}. $$
Note that $\omega$ is a  primitive $p$-th root of unity and $f(x)=1+x+x^2+\cdots+x^{p-1}$ is the minimal polynomial of $\omega$ with respect to the field $\mathbb{Q}$ of rational numbers. Then $\omega,\omega^2,\ldots,\omega^{p-1}$ form a basis of the extension field $\mathbb{Q}(\omega)$, which can be viewed as a linear space over $\mathbb{Q}$. Hence, from the above equation we claim that $S=kT$ for some $k\in \mathbb{Z}_p^*$.

$(2)\Rightarrow(3)$. Note that $\mathrm{Aut}(\mathbb{Z}_p)=\mathbb{Z}_{p}^*$, by the arguments before Lemma \ref{lem-3-1} we may conclude that  $X(\mathbb{Z}_p,S)\cong X(\mathbb{Z}_p,T)$.

$(3)\Rightarrow(1)$. Obviously.

We complete this proof.
\end{proof}

The following lemma gives a necessary and sufficient condition for two Cayley graphs on $D_{2p}$ of type-II to be cospectral.
\begin{lem}\label{lem-3-3}
Suppose that $X(D_{2p},S)$ and $X(D_{2p},T)$ are two Cayley graphs on $D_{2p}$ of type-II, where $S=\{ba^{s_1},ba^{s_2},ba^{s_3}\}$ and $T=\{ba^{t_1},ba^{t_2},ba^{t_3}\}$. Then $\mathrm{Spec}(X(D_{2p},S)))=\mathrm{Spec}(X(D_{2p},T))$ if and only if
 there exists some $\lambda\in \mathbb{Z}_p^*$ such that $\{\pm(s_1-s_2),\pm(s_1-s_3),\pm(s_2-s_3)\}=\lambda \{\pm(t_1-t_2),\pm(t_1-t_3),\pm(t_2-t_3)\}$.
\end{lem}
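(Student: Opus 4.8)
The plan is to reduce the cospectrality of the two type-II dihedral Cayley graphs to the cospectrality of a pair of associated circulant graphs on $\mathbb{Z}_p$, and then to invoke Lemma \ref{lem-3-2}. First I would read off the spectra from Corollary \ref{cor-2-1}(2) with $n=p$ odd: both $\mathrm{Spec}(X(D_{2p},S))$ and $\mathrm{Spec}(X(D_{2p},T))$ consist of the common pair $[3]^1,[-3]^1$ together with the values $[\pm\sqrt{a_h(\cdot)}]^2$ for $1\le h\le\frac{p-1}{2}$. Since equal multisets stay equal after deleting the common part $\{3,-3\}$, and since squaring recovers $a_h$ from the pair $\pm\sqrt{a_h}$ (each occurring with multiplicity $2$), the two graphs are cospectral if and only if the multisets $\{a_h(S)\mid 1\le h\le\frac{p-1}{2}\}$ and $\{a_h(T)\mid 1\le h\le\frac{p-1}{2}\}$ coincide.

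Next I would pass to circulants. Put $D_S=\{\pm(s_1-s_2),\pm(s_1-s_3),\pm(s_2-s_3)\}$ and define $D_T$ analogously; these are symmetric multi-subsets of $\mathbb{Z}_p\setminus\{0\}$, the differences being nonzero because $s_1,s_2,s_3$ are pairwise distinct. Comparing the expression for $a_h(S)$ in Corollary \ref{cor-2-1}(2) with Lemma \ref{lem-2-4}, one sees that $a_h(S)=3+\lambda_h$, where $\lambda_h=\sum_{d\in D_S}\cos\frac{2\pi hd}{p}$ is exactly the $h$-th eigenvalue of the circulant graph $X(\mathbb{Z}_p,D_S)$. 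Using $\lambda_h=\lambda_{p-h}$ together with the fact that the trivial eigenvalue $\lambda_p=|D_S|=6$ is common to both graphs, the half-range multiset equality from the previous step is equivalent to the full spectral equality $\mathrm{Spec}(X(\mathbb{Z}_p,D_S))=\mathrm{Spec}(X(\mathbb{Z}_p,D_T))$.

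Finally, I would apply Lemma \ref{lem-3-2} to the symmetric multi-subsets $D_S$ and $D_T$: these circulants are cospectral precisely when $D_S=\lambda D_T$ for some $\lambda\in\mathbb{Z}_p^*$, which is exactly the claimed condition $\{\pm(s_1-s_2),\pm(s_1-s_3),\pm(s_2-s_3)\}=\lambda\{\pm(t_1-t_2),\pm(t_1-t_3),\pm(t_2-t_3)\}$. Both directions then follow, since every implication above is an equivalence.

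The main technical point to handle carefully is the bookkeeping between the index range $1\le h\le\frac{p-1}{2}$ of the dihedral spectrum and the full range $1\le h\le p$ of the circulant spectrum, and the fact that $D_S$ is genuinely a \emph{multiset}: repeated differences do occur, for instance when $2s_1=s_2+s_3$, which forces $s_1-s_2=-(s_1-s_3)$. This is precisely why Lemma \ref{lem-3-2} was stated for multi-subsets rather than ordinary subsets, and it is the one place where one must resist treating $D_S$ as a set.
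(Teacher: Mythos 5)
Your proof is correct and follows essentially the same route as the paper: reduce cospectrality to equality of the multisets $\{a_h(\cdot)\mid 1\le h\le \frac{p-1}{2}\}$ via Corollary \ref{cor-2-1}, pass to the circulant graphs on $\mathbb{Z}_p$ whose connection multisets are the $\pm$-differences, and invoke Lemma \ref{lem-3-2}. Your explicit handling of the half-range versus full-range indexing (via $a_h=a_{p-h}$ and the common trivial eigenvalue) and your remark that the difference multiset can have repeated elements merely make precise what the paper passes over as ``simple observation,'' and indeed match the paper's stated reason for proving Lemma \ref{lem-3-2} for multi-subsets.
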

\begin{proof}
Suppose that $\mathrm{Spec}(X(D_{2p},S)))=\mathrm{Spec}(X(D_{2p},T))$. By Corollary \ref{cor-2-1}, we have  $\{a_h(S)\mid 1\leq h\leq \frac{p-1}{2}\}=\{a_{l}(T)\mid 1\leq l\leq \frac{p-1}{2}\}$, that is,
\begin{equation}\label{equ-1}
\begin{array}{ll}
&\{\cos\frac{2(s_1-s_2)h \pi}{p}+\cos\frac{2 (s_1-s_3)h\pi}{p}+\cos\frac{2(s_2-s_3)h \pi}{p}\mid 1\leq h\leq \frac{p-1}{2}\}\\
=&\{\cos\frac{2(t_1-t_2) l\pi }{p}+\cos\frac{2(t_1-t_3) l\pi}{p}+\cos\frac{2(t_2-t_3)l \pi}{p}\mid 1\leq l\leq \frac{p-1}{2}\}.
\end{array}
\end{equation}
By simple observation, we may conclude that Eq. (\ref{equ-1}) is equivalent to
\begin{equation}\label{equ-2}
\begin{array}{ll}
&\{\cos\frac{2(s_1-s_2)h \pi}{p}+\cos\frac{2 (s_1-s_3)h\pi}{p}+\cos\frac{2(s_2-s_3)h \pi}{p}\mid 1\leq h\leq p\}\\
=&\{\cos\frac{2(t_1-t_2) l\pi }{p}+\cos\frac{2(t_1-t_3) l\pi}{p}+\cos\frac{2(t_2-t_3)l \pi}{p}\mid 1\leq l\leq p\}.
\end{array}
\end{equation}
Let $A_S=\{s_1-s_2,s_1-s_3,s_2-s_3\}$ and $A_T=\{t_1-t_2,t_1-t_3,t_2-t_3\}$. Thus Eq. (2) implies that $X(\mathbb{Z}_p,A_S\cup(-A_S))$ and $X(\mathbb{Z}_p,A_T\cup(-A_T))$ are cospectral by Lemma \ref{lem-2-4}. Then, by Lemma \ref{lem-3-2},  there exists some $\lambda\in \mathbb{Z}_p^*$ such that $A_S\cup(-A_S)=\lambda [A_T\cup(-A_T)]$, i.e.,  $\{\pm(s_1-s_2),\pm(s_1-s_3),\pm(s_2-s_3)\}=\lambda \{\pm(t_1-t_2),\pm(t_1-t_3),\pm(t_2-t_3)\}$.

Conversely, suppose that $\{\pm(s_1-s_2),\pm(s_1-s_3),\pm(s_2-s_3)\}=\lambda \{\pm(t_1-t_2),\pm(t_1-t_3),\pm(t_2-t_3)\}$ for some $\lambda\in \mathbb{Z}_p^*$. Then $\mathrm{Spec}(X(\mathbb{Z}_p,A_S\cup(-A_S)))=\mathrm{Spec}(X(\mathbb{Z}_p,A_T\cup(-A_T)))$ again by Lemma \ref{lem-3-2}, which leads to Eq. (\ref{equ-2}) again by Lemma \ref{lem-2-4}, and so Eq. (1) holds. Thus $\mathrm{Spec}(X(D_{2p},S)))=\mathrm{Spec}(X(D_{2p},T))$
again by Corollary \ref{cor-2-1}.

It follows our result.
\end{proof}

\begin{lem}\label{lem-3-4}
Let $X(D_{2p},S)$ be the Cayley graph on $D_{2p}$ of type-II with respect to $S=\{ba^{s_1},ba^{s_2},ba^{s_3}\}$. Then $S\sim\{b,ba,ba^{s}\}$ for some $s\in \mathbb{Z}_{p}\setminus\{0,1\}$, and thus $X(D_{2p},S)\cong X(D_{2p},\{b,ba,ba^{s}\})$.
\end{lem}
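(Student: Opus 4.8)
The plan is to exploit the group automorphisms $\sigma_{\lambda,k}$ together with the equivalence relation $\sim$ introduced before Lemma~\ref{lem-3-1}, and to use the fact that $\mathbb{Z}_p$ is a field. Recall that $\sigma_{\lambda,k}$ acts on the reflections by $\sigma_{\lambda,k}(ba^j)=ba^{\lambda j+k}$, so applying it to $S=\{ba^{s_1},ba^{s_2},ba^{s_3}\}$ yields $\{ba^{\lambda s_1+k},ba^{\lambda s_2+k},ba^{\lambda s_3+k}\}$. Since $|S|=3$, the exponents $s_1,s_2,s_3$ are pairwise distinct in $\mathbb{Z}_p$. The goal is therefore to choose $\lambda\in\mathbb{Z}_p^*$ and $k\in\mathbb{Z}_p$ so that this image set equals $\{b,ba,ba^s\}=\{ba^0,ba^1,ba^s\}$ for a suitable $s$.

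First I would fix the first two target exponents by imposing $\lambda s_1+k=0$ and $\lambda s_2+k=1$. Subtracting gives $\lambda(s_2-s_1)=1$; since $s_1\neq s_2$ and $p$ is prime, $s_2-s_1$ is invertible modulo $p$, so I may set $\lambda=(s_2-s_1)^{-1}\in\mathbb{Z}_p^*$ and then $k=-\lambda s_1$. With this choice the third exponent becomes $\lambda s_3+k=\lambda(s_3-s_1)=(s_3-s_1)(s_2-s_1)^{-1}$, which I denote by $s$. Hence $\sigma_{\lambda,k}(S)=\{b,ba,ba^s\}$, i.e. $S\sim\{b,ba,ba^s\}$.

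It remains to check $s\in\mathbb{Z}_p\setminus\{0,1\}$. Because $s_3\neq s_1$ we have $s\neq 0$, and because $s_3\neq s_2$ we have $s_3-s_1\neq s_2-s_1$, whence $s=\lambda(s_3-s_1)\neq\lambda(s_2-s_1)=1$; thus $s\neq1$. Finally, Lemma~\ref{lem-3-1} applied to the automorphism $\sigma_{\lambda,k}$ gives the isomorphism $X(D_{2p},S)\cong X(D_{2p},\{b,ba,ba^s\})$.

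There is essentially no hard step here: the whole argument is a normalization that uses only the affine action $j\mapsto\lambda j+k$ of $\mathrm{Aut}(D_{2p})$ on reflection exponents together with the invertibility of nonzero residues mod $p$. The only point requiring a moment's care is confirming that the normalized parameter $s$ avoids both $0$ and $1$, which is precisely where the distinctness of $s_1,s_2,s_3$ (guaranteed by $|S|=3$) is used.
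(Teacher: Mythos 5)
Your proof is correct and takes essentially the same approach as the paper: the identical normalization $\lambda=(s_2-s_1)^{-1}$, $k=-\lambda s_1$, yielding $s=(s_2-s_1)^{-1}(s_3-s_1)$, followed by an appeal to Lemma~\ref{lem-3-1}. The only difference is that you explicitly verify $s\notin\{0,1\}$ from the pairwise distinctness of $s_1,s_2,s_3$, a check the paper's proof leaves to the reader.
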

\begin{proof}
Since $s_1,s_2\in \mathbb{Z}_p$, $s_1\neq s_2$ and $p$ is a prime, we have $s_2-s_1\in \mathbb{Z}_p^*$. Taking $\lambda=(s_2-s_1)^{-1}\in \mathbb{Z}_p^*$ and $k=-(s_2-s_1)^{-1}s_1\in \mathbb{Z}_p$, one can easily  verify that $\sigma_{\lambda,k}(S)=\{b,ba,ba^{s}\}$, where $s=(s_2-s_1)^{-1}(s_3-s_1)\in \mathbb{Z}_{p}\setminus\{0,1\}$. This implies that $S\sim\{b,ba,ba^{s}\}$, and so $X(D_{2p},S)\cong X(D_{2p},\{b,ba,ba^{s}\})$ by Lemma \ref{lem-3-1}.
\end{proof}
Recall that $D_{2p}=\langle a,b\mid a^p=b^2=1,bab=a^{-1}\rangle$ ($p$ is an odd prime) is the dihedral group of order $2p$, and note that each Cayley graph on $D_{2p}$ of type-II corresponds to a subset of $b\langle a\rangle$ with three elements. Let $\mathcal{S}$ be the set consists of all subsets of $b\langle a\rangle$ with three elements. Then `$\sim$' can define an equivalence relation on $\mathcal{S}$. We denote by  $[S]$ the equivalence class of $S\in \mathcal{S}$, i.e., $[S]=\{\sigma_{\lambda,k}(S)\mid \lambda\in\mathbb{Z}_p^*,k\in\mathbb{Z}_p\}$, and by $\mathcal{S}/\!\sim$ the set of equivalence classes. By Lemma \ref{lem-3-4}, for each  $[S]\in \mathcal{S}/\!\sim$, we can choose $S=\{b,ba,ba^s\}$  as the representation element of $[S]$ for some $s\in \mathbb{Z}_{p}\setminus\{0,1\}$. Furthermore, from Lemma \ref{lem-3-1} we know that $X(D_{2p},S')\cong X(D_{2p},S)$ for any $S'\in [S]$.

The following result determines all  equivalence classes of $\mathcal{S}$.
\begin{lem}\label{lem-3-5}
For $s,t\in \mathbb{Z}_{p}\setminus\{0,1\}$, let $S=\{b,ba,ba^s\}$ and  $T=\{b,ba,ba^t\}$. Then $[S]=[T]$ if and only if  $s=t$ or $st=1$ or $s+t=1$ or $s-st-1=0$ or $ t-st-1=0$ or $s+t-st=0$.
\end{lem}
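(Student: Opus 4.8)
The plan is to convert the relation $[S]=[T]$ into an explicit combinatorial condition over $\mathbb{Z}_p$ and then to enumerate the finitely many ways in which it can be met. By the definition of the equivalence classes, $[S]=[T]$ holds exactly when $T=\sigma_{\lambda,k}(S)$ for some $\lambda\in\mathbb{Z}_p^{*}$ and $k\in\mathbb{Z}_p$. I would first apply the explicit action $\sigma_{\lambda,k}(ba^{j})=ba^{\lambda j+k}$ to get $\sigma_{\lambda,k}(S)=\{ba^{k},\,ba^{\lambda+k},\,ba^{\lambda s+k}\}$, so that $T=\sigma_{\lambda,k}(S)$ becomes the equality of exponent sets $\{0,1,t\}=\{k,\,\lambda+k,\,\lambda s+k\}$ in $\mathbb{Z}_p$. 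Thus the whole statement reduces to deciding when the triple $(0,1,s)$ can be carried onto the triple $(0,1,t)$ by an affine map $j\mapsto\lambda j+k$ with $\lambda\neq 0$.

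The key observation is that such an affine map is a bijection of $\mathbb{Z}_p$ and is completely determined by the images of the two points $0$ and $1$. Consequently there are exactly $3!=6$ candidate assignments sending $\{0,1,s\}$ onto $\{0,1,t\}$; in each one, reading off $k$ as the image of $0$ and $\lambda$ as (image of $1$)$-k$ fixes the pair $(\lambda,k)$ uniquely, and the surviving requirement that $s$ land on the leftover element of $\{0,1,t\}$ collapses to a single polynomial relation between $s$ and $t$. I would simply run through the six assignments; they produce, respectively, $s=t$, $st=1$, $s+t=1$, $s-st-1=0$, $t-st-1=0$ and $s+t-st=0$, which is precisely the list claimed.

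This single computation settles both directions at once. For sufficiency, each relation on the list arrives with the explicit pair $(\lambda,k)$ manufactured in its case---for instance $(\lambda,k)=(1,0)$ for $s=t$, $(\lambda,k)=(t,0)$ for $st=1$, $(\lambda,k)=(-1,1)$ for $s+t=1$, $(\lambda,k)=(t-1,1)$ for $s-st-1=0$, $(\lambda,k)=(-t,t)$ for $t-st-1=0$, and $(\lambda,k)=(1-t,t)$ for $s+t-st=0$---and a one-line verification shows $\sigma_{\lambda,k}(S)=T$, whence $[S]=[T]$ by Lemma \ref{lem-3-1}. For necessity, any solving pair $(\lambda,k)$ realizes one of the six matchings and hence forces one of the six relations.

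The only delicate point, and the one place the hypotheses $s,t\in\mathbb{Z}_p\setminus\{0,1\}$ enter, is confirming that each case yields an admissible scalar $\lambda\neq 0$; this is automatic, since the scalars that arise are exactly $\pm1$, $\pm t$ and $\pm(t-1)$, all nonzero because $t\neq 0$ and $t\neq 1$. I do not expect any genuinely hard step: the entire argument is the bookkeeping of six cases. It is worth noting the conceptual content behind the algebra---the six admissible values of $t$ are $s$, $1/s$, $1-s$, $(s-1)/s$, $1/(1-s)$ and $s/(s-1)$, i.e.\ the orbit of $s$ under the anharmonic group $\cong S_{3}$---which reflects that an unordered triple $\{0,1,s\}$ is an affine invariant only up to a relabelling of its three elements.
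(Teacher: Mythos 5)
Your proposal is correct and follows essentially the same route as the paper: both reduce $[S]=[T]$ to the exponent-set equation $\{k,\lambda+k,\lambda s+k\}=\{0,1,t\}$ and enumerate the six possible matchings of $\{0,1,s\}$ onto $\{0,1,t\}$ (the paper organizes them by the value of $k\in\{0,1,t\}$, you by the six bijections, which is the same casework). Your explicit table of pairs $(\lambda,k)$ for the converse direction is a slightly more complete record of what the paper dismisses with ``we can easily select suitable $\lambda$ and $k$,'' but the argument is the same.
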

\begin{proof}
Suppose that $[S]=[T]$. Then $S\sim T$, and so there exist some $\lambda\in\mathbb{Z}_p^*$ and $k\in\mathbb{Z}_p$ such that $\sigma_{\lambda,k}(S)=T$. This is equivalent to  $\lambda\{0,1,s\}+k=\{0,1,t\}$ (the equality is taken  modulo  $p$ and we omit it in the following), i.e., $\{k,\lambda+k,\lambda s+k\}=\{0,1,t\}$. If $k=0$, then $\{0,\lambda,\lambda s\}=\{0,1,t\}$, and so $\lambda=1$ and $\lambda s=t$, or $\lambda=t$ amd $\lambda s=1$. In the former case, we have $s=t$, and in the later case, we have $st=1$. If $k=1$, then $\{1,\lambda+1,\lambda s+1\}=\{0,1,t\}$, and so $\lambda+1=0$ and $\lambda s+1=t$, or $\lambda+1=t$ and $\lambda s+1=0$.
The former  implies that $s+t=1$, and the later  implies that $s-st-1=0$. If $k=t$, then $\{t,\lambda+t,\lambda s+t\}=\{0,1,t\}$, and so $\lambda+t=0$ and $\lambda s+t=1$, or $\lambda+t=1$ and $\lambda s+t=0$.
If the former case occurs, then $t-st-1=1$, and if the later case occurs, then $s+t-st=0$.

Conversely, if one of the six conditions holds, we can easily select suitable $\lambda\in\mathbb{Z}_p^*$ and $k\in\mathbb{Z}_p$ such that
$\sigma_{\lambda,k}(S)=T$ by the above arguments. Hence, $S\sim T$, and thus $[S]=[T]$.

This completes the proof.
\end{proof}

Recall that each equivalence class $[S]\in \mathcal{S}/\!\sim$ has a representation element of the form $S=[b,ba,ba^s]$ for some $s\in \mathbb{Z}_p\setminus\{0,1\}$, and all the cubic Cayley graphs on $D_{2p}$ of type-II with equivalent generating sets are isomorphic. Thus the following result determines all  isomorphic classes of cubic Cayley graphs on $D_{2p}$ of type-II.
\begin{thm}\label{thm-3-1}
For $s,t\in\mathbb{Z}_p\setminus\{0,1\}$, let $X(D_{2p},S)$ and $X(D_{2p},T)$ be two Cayley graphs on $D_{2p}$ of type-II with $S=\{b,ba,ba^{s}\}$ and $T=\{b,ba,ba^t\}$. Then the following are equivalent:

\begin{enumerate}[(1)]
    \vspace{-0.25cm}
\item $\mathrm{Spec}(X(D_{2p},S)))=\mathrm{Spec}(X(D_{2p},T))$.
    \vspace{-0.25cm}
\item $s=t$ or $st=1$ or $s+t=1$ or $s-st-1=0$ or $ t-st-1=0$ or $s+t-st=0$.
    \vspace{-0.25cm}
\item $[S]=[T]$.
    \vspace{-0.25cm}
\item $X(D_{2p},S)\cong X(D_{2p},T)$.
\end{enumerate}
\end{thm}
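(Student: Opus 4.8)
The plan is to prove the cyclic chain of implications $(4)\Rightarrow(1)\Rightarrow(2)\Rightarrow(3)\Rightarrow(4)$, which suffices to make all four statements equivalent. Three of these four links are immediate from results already in hand. The implication $(4)\Rightarrow(1)$ is trivial, since isomorphic graphs are cospectral. The implication $(2)\Rightarrow(3)$ is exactly the ``if'' (converse) direction of Lemma \ref{lem-3-5}, whose hypotheses are precisely the six relations in condition $(2)$ for the chosen representatives $S=\{b,ba,ba^s\}$ and $T=\{b,ba,ba^t\}$. And $(3)\Rightarrow(4)$ follows from Lemma \ref{lem-3-1}: if $[S]=[T]$ then $S\sim T$, whence $X(D_{2p},S)\cong X(D_{2p},T)$. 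Thus the entire content of the theorem is carried by the single implication $(1)\Rightarrow(2)$.

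To prove $(1)\Rightarrow(2)$, I would first translate the spectral hypothesis into a concrete arithmetic statement via Lemma \ref{lem-3-3}. For the chosen representatives the exponent sets are $\{0,1,s\}$ and $\{0,1,t\}$, so the associated difference multisets are $A_S\cup(-A_S)=\{\pm1,\pm s,\pm(s-1)\}$ and $A_T\cup(-A_T)=\{\pm1,\pm t,\pm(t-1)\}$. Hence Lemma \ref{lem-3-3} asserts that $\mathrm{Spec}(X(D_{2p},S))=\mathrm{Spec}(X(D_{2p},T))$ holds if and only if there is some $\lambda\in\mathbb{Z}_p^*$ with $\{\pm1,\pm s,\pm(s-1)\}=\lambda\{\pm1,\pm t,\pm(t-1)\}$ as multisets in $\mathbb{Z}_p$. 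The remaining task is purely to extract from this one multiset identity the disjunction of the six polynomial conditions in $(2)$.

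The mechanism I would use is a finite case analysis. Each $6$-element multiset is symmetric under negation, so it splits into three unordered pairs $\{\pm1\}$, $\{\pm s\}$, $\{\pm(s-1)\}$ (respectively with $t$), none of which is $\{0\}$ because $s,t\in\mathbb{Z}_p\setminus\{0,1\}$ forces $1,s,s-1$ to be nonzero. Multiplication by $\lambda$ must match the three $T$-pairs bijectively onto the three $S$-pairs. The image of $\{\pm1\}$ already determines $\lambda$ up to sign (it is $\pm1$, $\pm s$, or $\pm(s-1)$ according to which $S$-pair receives it), and substituting this $\lambda$ into the two remaining pair-equalities and clearing signs produces, in each of the $3!$ pairings, exactly one of the relations $s=t$, $st=1$, $s+t=1$, $s-st-1=0$, $t-st-1=0$, $s+t-st=0$. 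Conversely the six pairings exhaust these six relations, which is the conceptual reason precisely six conditions appear: solving each relation for $t$ yields $t=s,\,1/s,\,1-s,\,(s-1)/s,\,1/(1-s),\,s/(s-1)$, the orbit of $s$ under the anharmonic (cross-ratio) group isomorphic to $S_3$ generated by $s\mapsto1-s$ and $s\mapsto1/s$, of which the scaled unordered triple $\{\pm1,\pm s,\pm(s-1)\}$ is an invariant.

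The step I expect to be the main obstacle is this case analysis, and specifically the bookkeeping of sign choices: since each pair has the form $\{x,-x\}$, a pairing determines $\lambda$ only up to sign, so one must check that the two sign branches each either collapse to an admissible relation or are excluded, the exclusions relying on $p$ being odd and on $s,t\neq0,1$. Extra care is needed for the degenerate configurations in which two of the three pairs coincide (for instance $s=-1$, or $2s=1$), where the pairing is not unique; these are exactly the cases where $s$ is a fixed point of a cross-ratio transformation, and I would verify directly that each still yields one of the six relations. Once every branch is dispatched, $(1)\Rightarrow(2)$ is established and the cycle closes, giving the full equivalence.
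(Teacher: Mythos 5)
Your proposal is correct and takes essentially the same route as the paper: the paper likewise reduces everything to $(1)\Rightarrow(2)$ (citing Lemma \ref{lem-3-5} and Lemma \ref{lem-3-1} for the remaining implications), invokes Lemma \ref{lem-3-3} to obtain the multiset identity $\{\pm 1,\pm s,\pm(s-1)\}=\lambda\{\pm 1,\pm t,\pm(t-1)\}$, and finishes with a finite case analysis (organized there as four sign patterns times three choices of $\lambda$, rather than your six pair-matchings) followed by the same degenerate cases $s,t\in\{-1,2,2^{-1}\}$ where elements of the multiset coincide. Your pairing-based bookkeeping, including the determination of $\lambda$ up to sign and the cross-ratio interpretation of the six conditions, is sound and reaches the paper's conclusion in every branch.
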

\begin{proof}
It remains to prove $(1)\Rightarrow(2)$ by Lemma \ref{lem-3-5} and Lemma \ref{lem-3-1}.  Suppose that $\mathrm{Spec}(X(D_{2p},S))=\mathrm{Spec}(X(D_{2p},T))$.  By Lemma \ref{lem-3-3}, there exists some $\lambda\in\mathbb{Z}_p^*$  such that
\begin{equation}\label{equ-3}
\{\pm 1,\pm s,\pm(s-1)\}=\lambda\{\pm 1,\pm t,\pm(t-1)\}.
\end{equation}

First assume that all elements of $\{\pm 1,\pm s,\pm(s-1)\}$ are distinct. By simple observation, we only need to consider the following four cases:

{\flushleft\bf Case 1. } $\{1,s,s-1\}=\lambda\{1,t,t-1\}=\{\lambda,\lambda t,\lambda(t-1)\}$;

If $\lambda=1$, then it forces that $s=t$ because $0\neq 2$ due to $p$ is an odd prime.  If $\lambda=s$, then $st=1$ and $s(t-1)=s-1$, or $st=s-1$ and $s(t-1)=1$. The former implies that $s=1$, and the later implies that $0=2$, both of them are impossible. If $\lambda=s-1$, then $(s-1)t=1$ and $(s-1)(t-1)=s$, or $(s-1)t=s$ and $(s-1)(t-1)=1$. The former implies that $s=1$, which is impossible, and the later implies that $s+t-st=0$.

{\flushleft\bf Case 2. } $\{1,s,s-1\}=\lambda\{1,t,-(t-1)\}=\{\lambda,\lambda t,-\lambda(t-1)\}$;

If $\lambda=1$, then we have $s=1$, a contradiction. If $\lambda=s$, then $st=1$ and $-s(t-1)=s-1$, and $st=s-1$ and $-s(t-1)=1$. The former implies that $st=1$, and the later implies that  $s-st-1=0$. If $\lambda=s-1$, then $(s-1)t=1$ and $-(s-1)(t-1)=s$, or $(s-1)t=s$ and $-(s-1)(t-1)=1$. In both cases, we obtain that $0=2$, a contradiction.

{\flushleft\bf Case 3. } $\{1,s,s-1\}=\lambda\{1,-t,t-1\}=\{\lambda,-\lambda t,\lambda(t-1)\}$;

If $\lambda=1$, then we have $s=0$, which is impossible.
If $\lambda=s$, then $-st=1$ and $s(t-1)=s-1$, or $-st=s-1$ and $s(t-1)=1$. In both cases, we obtain that $s=0$, which is impossible. If $\lambda=s-1$, then $-(s-1)t=1$ and $(s-1)(t-1)=s$, or $-(s-1)t=s$ and $(s-1)(t-1)=1$. In both cases, we deduce that $s=0$, a contradiction.

{\flushleft\bf Case 4. } $\{1,s,s-1\}=\lambda\{1,-t,-(t-1)\}=\{\lambda,-\lambda t,-\lambda(t-1)\}$.

If $\lambda=1$, we obtain that $s+t=1$. If $\lambda=s$, then $-st=1$ and $-s(t-1)=s-1$, or $-st=s-1$ and $-s(t-1)=1$. The former implies that $0=2$, and the later implies that $s=1$, both of them are impossible. If $\lambda=s-1$, then $-(s-1)t=1$ and $-(s-1)(t-1)=s$, or $-(s-1)t=s$ and $-(s-1)(t-1)=1$. The former implies that $t-st-1=0$, and the later implies that $s=1$, which is impossible.

Hence, if $\mathrm{Spec}(X(D_{2p},S)))=\mathrm{Spec}(X(D_{2p},T))$, then $s=t$ or $st=1$ or $s+t=1$ or $s-st-1=0$ or $ t-st-1=0$ or $s+t-st=0$.

Next assume that $\{\pm 1,\pm s,\pm(s-1)\}$ contains at least two  elements that are equal. Then $s=-1$, $s=2$ or $s=2^{-1}$ because $s\not\in\{0,1\}$. From Eq. (\ref{equ-3}) we know that $\{\pm 1,\pm t,\pm(t-1)\}$ also contains at least two elements that are equal, similarly we have  $t=-1$, $t=2$ or $t=2^{-1}$. It is easy to verify that these $s$'s and $t$'s satisfy at least one  of the six conditions shown in (2).

We complete this proof.
\end{proof}
Note that every cubic Cayley graph on $D_{2p}$ of type-I corresponds to a symmetric subset of $D_{2p}$ of the form $\{a^s,a^{-s},ba^{i}\}$, and all such subsets are equivalent. In fact, for any two subsets $S=\{a^s,a^{-s},ba^{i}\}$ and $T=\{a^t,a^{-t},ba^{j}\}$, we can take $\lambda=ts^{-1}\in\mathbb{Z}_p^{*}$ and $k=j-ts^{-1}i\in\mathbb{Z}_p$ such that $\sigma_{\lambda,k}(S)=T$, where $\sigma_{\lambda,k}\in\mathrm{Aut}(D_{2p})$. Then, from Theorem \ref{thm-2-2}, Theorem \ref{thm-2-3}, Lemma \ref{lem-3-1}, Lemma \ref{lem-3-4} and Theorem  \ref{thm-3-1}, we deduce the following corollary immediately.
\begin{cor}\label{cor-3-1}
Let $X(D_{2p},S)$ and $X(D_{2p},T)$ be two cubic Cayley graphs on $D_{2p}$. Then the following are equivalent:

\begin{enumerate}[(1)]
    \vspace{-0.25cm}
\item $\mathrm{Spec}(X(D_{2p},S)))=\mathrm{Spec}(X(D_{2p},T))$.
    \vspace{-0.25cm}
\item There exist some $\lambda\in \mathbb{Z}_{2p}^*$ and $k\in \mathbb{Z}_{2p}$ such that $T=\sigma_{\lambda,k}(S)$, where $\sigma_{\lambda,k}\in\mathrm{Aut}(D_{2p})$.
    \vspace{-0.25cm}
\item $X(D_{2p},S)\cong X(D_{2p},T)$.
\end{enumerate}
\end{cor}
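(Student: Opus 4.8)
The plan is to establish the cyclic chain of implications $(3)\Rightarrow(1)\Rightarrow(2)\Rightarrow(3)$, of which only $(1)\Rightarrow(2)$ carries real content. The implication $(3)\Rightarrow(1)$ is immediate, since isomorphic graphs are cospectral, and $(2)\Rightarrow(3)$ is exactly Lemma \ref{lem-3-1}, which asserts that the Cayley isomorphism $\Phi_{\sigma_{\lambda,k}}$ sends $X(D_{2p},S)$ to $X(D_{2p},\sigma_{\lambda,k}(S))$. Thus all the work is concentrated in showing that cospectrality forces the generating sets to be equivalent. Before starting, I would record two structural facts that hold because $p$ is an odd prime: by Lemma \ref{lem-2-6} every cubic Cayley graph on $D_{2p}$ is connected, and since $p$ is odd there is no type-III graph, so each of $S$ and $T$ is of type-I or of type-II.

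The first step of $(1)\Rightarrow(2)$ is to show that cospectral cubic Cayley graphs on $D_{2p}$ must be of the same type. This is precisely what I would extract from Theorem \ref{thm-2-3}, or more concretely from the explicit spectra in Corollary \ref{cor-2-1}: a type-I graph carries the eigenvalues $3$ and $1$, whereas a type-II graph is bipartite, so its spectrum is symmetric about the origin and contains $\pm 3$; the two spectra can therefore never coincide. Hence $\mathrm{Spec}(X(D_{2p},S))=\mathrm{Spec}(X(D_{2p},T))$ forces $S$ and $T$ to be both of type-I or both of type-II.

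In the both-type-I case I would invoke the remark preceding the corollary: any two subsets $\{a^s,a^{-s},ba^i\}$ and $\{a^t,a^{-t},ba^j\}$ are related by the explicit automorphism $\sigma_{\lambda,k}$ with $\lambda=ts^{-1}$ and $k=j-ts^{-1}i$, which yields $(2)$ directly. In the both-type-II case I would first apply Lemma \ref{lem-3-4} to replace $S$ and $T$ by equivalent sets in the normalized form $\{b,ba,ba^s\}$ and $\{b,ba,ba^t\}$; cospectrality is preserved under these replacements, since they are isomorphisms, so Theorem \ref{thm-3-1} applies and yields $[S]=[T]$, i.e. $T=\sigma_{\lambda,k}(S)$ for suitable $\lambda,k$. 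Composing the normalizing automorphisms with this one, which is legitimate because $\mathrm{Aut}(D_{2p})$ is a group, produces a single $\sigma_{\lambda,k}\in\mathrm{Aut}(D_{2p})$ relating the original $S$ and $T$, and establishes $(2)$.

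The genuinely hard part has already been isolated into Theorem \ref{thm-3-1}, whose proof is the delicate case analysis over the six quadratic conditions; granting it, the remaining difficulty here is purely organizational. The one point I would treat carefully is the bookkeeping of composing automorphisms in the type-II reduction, together with the verification that the mixed-type exclusion really follows at the level of \emph{spectra} rather than only from the weaker non-isomorphism statement; this is why I would read Theorem \ref{thm-2-3} through the lens of bipartite versus non-bipartite spectra, as in Corollary \ref{cor-2-1}, instead of through isomorphism alone.
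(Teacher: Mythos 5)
Your proposal is correct and takes essentially the same route as the paper, which deduces the corollary ``immediately'' from exactly the ingredients you assemble: the type separation of Theorem \ref{thm-2-3} (resting, as you note, on the cospectrality claim extracted from Corollary \ref{cor-2-1}), the observation that all type-I generating sets are equivalent under an explicit $\sigma_{\lambda,k}$, and the combination of Lemma \ref{lem-3-1}, Lemma \ref{lem-3-4} and Theorem \ref{thm-3-1} for the type-II case. Your write-up merely makes explicit two points the paper leaves tacit---the bipartite-spectrum reason why the types cannot be cospectral, and the composition of the normalizing automorphisms---both of which are handled correctly.
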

\begin{remark}
\emph{It is worth mentioning that Corollary \ref{cor-3-1} implies that all  cubic Cayley graphs on $D_{2p}$ are CI-graphs. In fact, Babai in \cite{Babai1} had shown that $D_{2p}$ is a CI-group, that is, all  Cayley graphs on $D_{2p}$ are CI-graphs. Here we prove the same result for cubic Cayley graphs on $D_{2p}$ by using the spectral method, from which we also know that all cubic Cayley graphs on $D_{2p}$ are Cay-DS}.
\end{remark}
A graph is called \emph{hamiltonian} if it has a spanning cycle. The following corollary shows that every cubic Cayley graph on $D_{2p}$ is hamiltonian.
\begin{cor}\label{cor-3-2}
Let $X(D_{2p},S)$ be a cubic Cayley graph on $D_{2p}$. Then $X(D_{2p},S)$ is hamiltonian.
\end{cor}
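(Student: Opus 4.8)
The plan is to split into the two possible types and exhibit a spanning cycle in each. First I would note that since $p$ is an odd prime, $D_{2p}$ has no element of order $2$ inside $\langle a\rangle$, so type-III (which requires $n$ even) cannot occur; hence every cubic Cayley graph on $D_{2p}$ is of type-I or type-II. Moreover, by Lemma \ref{lem-2-6}, every such graph is automatically connected because $p$ is prime (the relevant gcd conditions hold whenever the defining parameters are nonzero mod $p$). So it suffices to produce a Hamilton cycle separately for the type-I and type-II cases.

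For type-I, with $S=\{a^s,a^{-s},ba^i\}$, Theorem \ref{thm-2-2} already identifies the graph up to isomorphism as $C_p\,\square\,K_2$, the prism (circular ladder) over the $p$-cycle. This graph is hamiltonian: writing its vertices as $u_0,\dots,u_{p-1}$ on one $p$-cycle and $v_0,\dots,v_{p-1}$ on the other, with rungs $u_j v_j$, the cycle $u_0\,u_1\cdots u_{p-1}\,v_{p-1}\,v_{p-2}\cdots v_0\,u_0$ is a spanning cycle. Since hamiltonicity is an isomorphism invariant, every type-I graph is hamiltonian.

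For type-II, with $S=\{ba^{s_1},ba^{s_2},ba^{s_3}\}$ and $s_1,s_2,s_3$ distinct mod $p$, I would produce the Hamilton cycle using only two of the three reflections. Each $ba^{s_j}$ is an involution, so $\{ba^{s_1},ba^{s_2}\}$ is a symmetric set not containing $1$, and $X(D_{2p},\{ba^{s_1},ba^{s_2}\})$ is a $2$-regular spanning subgraph of $X(D_{2p},S)$. Because $ba^{s_1}\cdot ba^{s_2}=a^{s_2-s_1}$ with $s_2-s_1\in\mathbb{Z}_p^*$ (here $p$ prime is essential), this product generates $\langle a\rangle$, whence $\langle ba^{s_1},ba^{s_2}\rangle=D_{2p}$. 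A connected $2$-regular graph is a single cycle, so this subgraph is one $2p$-cycle through all vertices, i.e.\ a Hamilton cycle of $X(D_{2p},S)$. Thus every type-II graph is hamiltonian as well, completing the argument.

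There is essentially no deep obstacle here; the only point requiring care is the type-II case, where one must verify that two of the generating involutions already generate the whole group so that their $2$-regular subgraph is \emph{connected} (and therefore a single spanning cycle rather than a union of shorter cycles). This is exactly where primality of $p$ enters, guaranteeing $s_2-s_1\neq 0$ forces $\langle a^{s_2-s_1}\rangle=\langle a\rangle$. For type-I one simply leans on Theorem \ref{thm-2-2} and the classical hamiltonicity of the prism.
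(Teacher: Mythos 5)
Your proof is correct and follows essentially the same route as the paper: type-I is handled via Theorem \ref{thm-2-2} and the hamiltonicity of the prism $C_p\,\square\,K_2$, and type-II via the observation that two of the three reflections span a Hamilton cycle. The only cosmetic difference is that the paper first normalizes $S$ to $\{b,ba,ba^s\}$ using Lemma \ref{lem-3-4} and declares $X(D_{2p},\{b,ba\})$ a spanning cycle, whereas you verify directly for general $\{ba^{s_1},ba^{s_2}\}$ that the $2$-regular subgraph is connected (since $a^{s_2-s_1}$ generates $\langle a\rangle$ when $p$ is prime) — which is precisely the content of the paper's ``easy to verify'' step.
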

\begin{proof}
If $X(D_{2p},S)$ is of type-I, then it is isomorphic to $C_{p}\square K_2$ by Theorem \ref{thm-2-2}, and so is hamiltonian. If $X(D_{2p},S)$ is of type-II, by Lemma \ref{lem-3-4}, we may assume that $S=\{b,ba,ba^s\}$ for some $s\in\mathbb{Z}_p\setminus\{0,1\}$.  It is easy to verify that $X(D_{2p},\{b,ba\})$ is exactly a spanning cycle contained in $X(D_{2p},S)$. Thus $X(D_{2p},S)$ is hamiltonian.
\end{proof}

\section{Enumerating the isomorphic classes of cubic Cayley graphs on $D_{2p}$}
In this section, we will enumerate the isomorphic classes of cubic Cayley graphs on $D_{2p}$. By Theorem \ref{thm-2-2} and Theorem \ref{thm-2-3}, we only need to enumerate the isomorphic classes of those graphs of type-II. For this purpose,  the six conditions in Theorem \ref{thm-3-1} (2) are called \emph{isomorphism conditions} of cubic Cayley graphs on $D_{2p}$ of type-II.

For $s,t\in \mathbb{Z}_p\setminus\{0,1\}$, we say that $s$ and $t$ are \emph{equivalent}, denoted by $s\simeq t$, if $s$ and $t$ satisfy one of the   isomorphism conditions, or equivalently, $t\in\{s,s^{-1},1-s,(s-1)s^{-1},(1-s)^{-1},s(s-1)^{-1}\}$. It is easy to see that `$\simeq$' defines an equivalence relation on $\mathbb{Z}_p\setminus\{0,1\}$. Let $[s]$ denote the equivalence class of $s\in \mathbb{Z}_p\setminus\{0,1\}$. Then we have
\begin{equation}\label{equ-4}
[s]=\{s,s^{-1},1-s,(s-1)s^{-1},(1-s)^{-1},s(s-1)^{-1}\}.
\end{equation}
Moreover, from Theorem \ref{thm-3-1} we see that the number of equivalence classes of $\mathbb{Z}_p\setminus\{0,1\}$, denoted by $\tilde{n}_p$, is just the  number of isomorphic classes of cubic Cayley graphs on $D_{2p}$ of type-II. Thus Theorem \ref{thm-3-1} actually provide us a method to enumerate the number of isomorphic classes of cubic Cayley graphs on $D_{2p}$ of type-II.

Now we give an example to show how to use the above method.
\begin{table}[t]\small{
\caption{\label{tab-2}\small{The equivalence classes of $\mathbb{Z}_p\setminus\{0,1\}$ ($p\leq 23$).}}
\begin{tabular*}{15cm}{@{\extracolsep{\fill}}ccccccc}
\toprule
$\mathbb{Z}_p$&$[2]$&[3]&[4]&[5]&[8]&$\tilde{n}_p$ \\
  \midrule
$\mathbb{Z}_3$&$\{2\}$& --&--&--&--&$1$ \\
$\mathbb{Z}_5$&$\{2,3,4\}$& --&--&--&--&$1$ \\
$\mathbb{Z}_7$&$\{2,4,6\}$&$\{3,5\}$&--&--&--&$2$\\
$\mathbb{Z}_{11}$&$\{2,6,10\}$& $\{3,4,5,7,8,9\}$&--&--&--&$2$ \\
$\mathbb{Z}_{13}$&$\{2,7,12\}$& $\{3,5,6,9,11,8\}$&\{4,10\}&--&--&$3$ \\
$\mathbb{Z}_{17}$&$\{2,9,16\}$& $\{3,6,8,15,12,10\}$&$\{4,5,7,13,14,11\}$&--&--&$3$ \\
$\mathbb{Z}_{19}$&$\{2,10,18\}$&$\{3,7,9,11,13,17\}$&$\{4,5,6,14,15,16\}$&--&$\{8,12\}$&$4$ \\
$\mathbb{Z}_{23}$&$\{2,12,22\}$& $\{3,8,11,13, 16,21\}$&$\{4,6,9, 20,18,15\}$&$\{5,7,10,14,17,19\}$&--&$4$ \\
  \bottomrule
\end{tabular*}}
\end{table}

\begin{exam}
In Tab. \ref{tab-2},  we list all the equivalence classes of $\mathbb{Z}_p\setminus\{0,1\}$ for $p\leq 23$. Let $p=7$. From Eq. (\ref{equ-4}), we see that $\mathbb{Z}_7\setminus\{0,1\}=\{2,3,4,5,6\}$ has two equivalence classes:  $[2]=\{2,4,6\}$, $[3]=\{3,5\}$. Thus there are exactly three cubic  Ceylay graphs on $D_{14}$ up to isomorphism. One  is of type-I: $X(D_{14},\{a,a^{-1},b\})$, and two are of type-II: $X(D_{14},\{b,ba,ba^2\})$, $X(D_{14},\{b,ba,ba^3\})$.
\end{exam}

In order to determine the exact value of $\tilde{n}_p$, we need some basic results about quadratic reciprocity in elementary number theory.

Let $p$ be an odd prime. For $n\in\mathbb{Z}$, the \emph{Legendre symbol} $\left(\frac{n}{p}\right)$ is defined as
$$
\left(\frac{n}{p}\right)=\left\{\begin{array}{ll}
0 & \mbox{ if } p \mbox{ divides } n;\\
1 & \mbox{ if } p \mbox{ does not divide } n \mbox{ and } n \mbox{ is a square modulo } p;\\
-1 & \mbox{ if } p \mbox{ does not divide } n \mbox{ and } n \mbox{ is not a square modulo } p.
\end{array}
\right.
$$
Using the fact that the group of squares of $\mathbb{F}_p^\times$ has index $2$ in $\mathbb{F}_p^\times$, one can easily deduce that (see \cite{Davidoff})
\begin{equation}\label{equ-5}
\left(\frac{mn}{p}\right)=\left(\frac{m}{p}\right)\left(\frac{n}{p}\right)~~~~~(m,n\in \mathbb{Z}).
\end{equation}

The following lemma has its own interest:

\begin{lem}[\cite{Davidoff}, Theorem 2.3.1]\label{lem-4-1}
For $n\in \mathbb{Z}: n^{\frac{p-1}{2}}\equiv\left(\frac{n}{p}\right)~(\mathrm{mod}~p)$.
\end{lem}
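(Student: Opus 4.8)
This statement is \emph{Euler's criterion}, and the plan is to separate the degenerate case $p\mid n$ from the main case $p\nmid n$. If $p\mid n$, then $n\equiv 0\pmod p$ gives $n^{\frac{p-1}{2}}\equiv 0\pmod p$, while $\left(\frac{n}{p}\right)=0$ by definition, so both sides agree. Hence I would dispose of this in one line and assume $p\nmid n$ throughout the rest of the argument.

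For $p\nmid n$, Fermat's little theorem gives $n^{p-1}\equiv 1\pmod p$, so $\left(n^{\frac{p-1}{2}}\right)^{2}\equiv 1\pmod p$. Since $\mathbb{Z}_p$ is a field, the polynomial $x^{2}-1$ has only the two roots $\pm 1$, whence $n^{\frac{p-1}{2}}\equiv \pm 1\pmod p$. It then remains to match this sign with the Legendre symbol. If $n$ is a quadratic residue, write $n\equiv m^{2}\pmod p$ with $p\nmid m$; then $n^{\frac{p-1}{2}}\equiv m^{p-1}\equiv 1\pmod p$ by Fermat, which agrees with $\left(\frac{n}{p}\right)=1$.

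The main obstacle is the non-residue case, where I must rule out the value $+1$, and I would handle it by a counting argument. By the previous paragraph, every quadratic residue is a root of $x^{\frac{p-1}{2}}-1$ over $\mathbb{Z}_p$; since the squares form an index-$2$ subgroup of $\mathbb{Z}_p^{\times}$ (the fact recalled before Eq. (\ref{equ-5})), there are exactly $\frac{p-1}{2}$ of them. As a polynomial of degree $\frac{p-1}{2}$ over a field has at most $\frac{p-1}{2}$ roots, the quadratic residues are \emph{precisely} the roots of $x^{\frac{p-1}{2}}-1$. Consequently none of the remaining $\frac{p-1}{2}$ nonzero elements, namely the non-residues, can satisfy $x^{\frac{p-1}{2}}\equiv 1$; as each still satisfies $x^{\frac{p-1}{2}}\equiv\pm 1$, each must satisfy $x^{\frac{p-1}{2}}\equiv -1$, matching $\left(\frac{n}{p}\right)=-1$. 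Alternatively, one could fix a primitive root $g$ of $\mathbb{Z}_p^{\times}$: the residues are the even powers and the non-residues the odd powers of $g$, and since $g^{\frac{p-1}{2}}$ is the unique element of order $2$ it equals $-1$, so $\left(g^{j}\right)^{\frac{p-1}{2}}=(-1)^{j}$ reads off the sign from the parity of $j$.
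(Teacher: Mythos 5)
Your proposal is correct, but there is nothing in the paper to compare it against: Lemma \ref{lem-4-1} is stated as a quoted result from \cite{Davidoff} (Theorem 2.3.1) and the paper supplies no proof of it, using it only as a known ingredient (together with the reciprocity law, Lemma \ref{lem-4-2}) inside the proof of Lemma \ref{lem-4-3}. Your argument is the standard proof of Euler's criterion and each step is sound: the degenerate case $p\mid n$ is immediate from the definition of the Legendre symbol; Fermat's little theorem plus the fact that $x^{2}-1$ has only the roots $\pm 1$ over the field $\mathbb{Z}_p$ pins $n^{\frac{p-1}{2}}$ to $\pm 1$; the residue case follows from Fermat again; and the counting argument for the non-residue case is the key point and is handled correctly --- the $\frac{p-1}{2}$ squares already exhaust the at most $\frac{p-1}{2}$ roots that the degree-$\frac{p-1}{2}$ polynomial $x^{\frac{p-1}{2}}-1$ can have over a field, so every non-residue must yield $-1$. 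The only point worth tightening is in your alternative primitive-root sketch: the claim that the non-residues are exactly the odd powers of $g$ itself requires a short parity argument (an odd power of $g$ cannot be a square because $p-1$ is even), but since that paragraph is offered only as an alternative to an already complete proof, it does not affect correctness.
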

Gauss' celebrated \emph{law of quadratic reciprocity} is shown in the following lemma.
\begin{lem}[\cite{Davidoff}, Theorem 2.3.2]\label{lem-4-2}
Let $p$ be an odd prime. Then
\begin{enumerate}[(1)]
\vspace{-0.25cm}
\item $\left(\frac{-1}{p}\right)=(-1)^{\frac{p-1}{2}}$;
    \vspace{-0.25cm}
\item $\left(\frac{2}{p}\right)=(-1)^{\frac{p^2-1}{8}}$;
    \vspace{-0.25cm}
\item if $q$ is an odd prime, distinct from $p: \left(\frac{q}{p}\right)=(-1)^{\frac{(p-1)(q-1)}{4}}\left(\frac{p}{q}\right)$.
\end{enumerate}
\end{lem}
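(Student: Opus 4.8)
The plan is to deduce all three statements from Euler's criterion (Lemma \ref{lem-4-1}), using \emph{Gauss' lemma} as the central intermediate tool for the last two parts. Part (1) is immediate: substituting $n=-1$ into Lemma \ref{lem-4-1} gives $(-1)^{\frac{p-1}{2}}\equiv\left(\frac{-1}{p}\right)\pmod{p}$; since both sides lie in $\{1,-1\}$ and $1\not\equiv-1\pmod p$ for an odd prime $p$, the congruence upgrades to an equality.

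For parts (2) and (3) I would first establish Gauss' lemma: if $a$ is coprime to $p$, and $\mu$ denotes the number of integers among $a,2a,\ldots,\frac{p-1}{2}a$ whose least positive residue modulo $p$ exceeds $\frac{p}{2}$, then $\left(\frac{a}{p}\right)=(-1)^{\mu}$. The proof reduces each product $ka$ to the symmetric range, observes that the resulting absolute values form a permutation of $\{1,\ldots,\frac{p-1}{2}\}$ (there are no collisions, since $ia\equiv\pm ja$ would force $i\equiv\pm j$), and multiplies all $\frac{p-1}{2}$ congruences together to obtain $a^{\frac{p-1}{2}}\equiv(-1)^{\mu}\pmod p$; Lemma \ref{lem-4-1} then identifies the left side with $\left(\frac{a}{p}\right)$. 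Part (2) follows by applying this with $a=2$: the multiples $2,4,\ldots,p-1$ are already their own least positive residues, and $2k>\frac{p}{2}$ exactly when $k>\frac{p}{4}$, so $\mu=\frac{p-1}{2}-\lfloor\frac{p}{4}\rfloor$. A short case analysis on $p$ modulo $8$ shows that $\mu$ is even precisely when $p\equiv\pm1\pmod 8$, which is exactly the condition that $\frac{p^2-1}{8}$ be even; hence $\left(\frac{2}{p}\right)=(-1)^{\frac{p^2-1}{8}}$.

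The main obstacle is part (3), the reciprocity law itself. Here I would pass to the lattice-point refinement of Gauss' lemma (Eisenstein's form): for odd $q$ coprime to $p$ one has $\left(\frac{q}{p}\right)=(-1)^{\sum_{k=1}^{(p-1)/2}\lfloor kq/p\rfloor}$. This is obtained from Gauss' lemma by writing $kq=p\lfloor kq/p\rfloor+r_k$, summing over $k$, and comparing with $\sum_{k}k=\sum(\text{small residues})+\sum(p-\text{large residues})$; because $q$ is odd, a parity reduction modulo $2$ cancels the even terms and yields $\mu\equiv\sum_{k=1}^{(p-1)/2}\lfloor kq/p\rfloor\pmod 2$. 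By symmetry $\left(\frac{p}{q}\right)=(-1)^{\sum_{l=1}^{(q-1)/2}\lfloor lp/q\rfloor}$.

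The key combinatorial step is then the identity
$$\sum_{k=1}^{(p-1)/2}\Big\lfloor\frac{kq}{p}\Big\rfloor+\sum_{l=1}^{(q-1)/2}\Big\lfloor\frac{lp}{q}\Big\rfloor=\frac{p-1}{2}\cdot\frac{q-1}{2},$$
which I would prove by counting the lattice points $(k,l)$ with $1\le k\le\frac{p-1}{2}$ and $1\le l\le\frac{q-1}{2}$ in two ways, splitting the rectangle along the line $qk=pl$; this line contains no lattice point in the range because $p$ and $q$ are coprime, so each half is counted cleanly by one of the floor-sums. Multiplying the two Legendre-symbol expressions and substituting this identity gives $\left(\frac{q}{p}\right)\left(\frac{p}{q}\right)=(-1)^{\frac{(p-1)(q-1)}{4}}$, which is equivalent to the stated form since each symbol squares to $1$. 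I expect the delicate points to be the parity bookkeeping linking $\mu$ to the floor-sum in Eisenstein's version, and the verification that the diagonal carries no lattice points so that the two-way count is exact.
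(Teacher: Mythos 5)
Your proof is correct, but there is nothing in the paper to compare it against: the paper does not prove this lemma at all. It is stated as a quotation of a classical theorem, namely [\cite{Davidoff}, Theorem 2.3.2] (Gauss' law of quadratic reciprocity), and is used as a black box, together with Euler's criterion (Lemma \ref{lem-4-1}), only to evaluate $\left(\frac{-3}{p}\right)$ in the proof of Lemma \ref{lem-4-3}. What you have written is the standard self-contained elementary proof, and it is sound: part (1) follows from Euler's criterion exactly as you say (the upgrade from congruence to equality is valid since both sides lie in $\{\pm 1\}$ and $p>2$); your statement and proof of Gauss' lemma are correct (the no-collision argument $ia\equiv\pm ja\Rightarrow i\equiv\pm j$ works because $i+j\le p-1$); the count $\mu=\frac{p-1}{2}-\lfloor p/4\rfloor$ for $a=2$ and the mod-$8$ case check do match the parity of $\frac{p^2-1}{8}$; and for part (3), the Eisenstein parity reduction $\mu\equiv\sum_{k=1}^{(p-1)/2}\lfloor kq/p\rfloor \pmod 2$ (using that $q$ is odd) together with the lattice-point identity $\sum_{k=1}^{(p-1)/2}\lfloor kq/p\rfloor+\sum_{l=1}^{(q-1)/2}\lfloor lp/q\rfloor=\frac{p-1}{2}\cdot\frac{q-1}{2}$ yields the result; the diagonal $qk=pl$ indeed contains no lattice points of the rectangle, since $\gcd(p,q)=1$ would force $p\mid k$ while $1\le k\le\frac{p-1}{2}$. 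The trade-off between your route and the paper's is one of economy versus self-containedness: your argument would make Section 4 independent of \cite{Davidoff}, but at the cost of reproving a deep classical result that is entirely tangential to the paper's contribution (the spectral classification and enumeration of cubic Cayley graphs on $D_{2p}$), which is why citing it is the reasonable choice here.
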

To determine the exact value of $\tilde{n}_p$, the following lemma is critical.
\begin{lem}\label{lem-4-3}
Let $p\ge5$ be a prime. We have
\begin{enumerate}[(1)]
\vspace{-0.25cm}
\item $[2]=\{2, 2^{-1} ,-1\}$;
    \vspace{-0.25cm}
\item if $p\equiv5~(\mathrm{mod}~6)$, then $|[s]|=6$ for all $s\not\in[2]$;
    \vspace{-0.25cm}
\item if $p\equiv1~(\mathrm{mod}~6)$, then the equation $s^2-s+1=0$ has exactly two distinct roots $s_0,s_0'$, where $s_0,s_0'\in \mathbb{Z}_p\setminus\{0,1\}$ and $s_0,s_0'\not\in [2]$.  Furthermore, $[s_0]=\{s_0,s_0'\}$, and $|\tilde{s}|=6$ for all $s\not\in [2]\cup[s_0]$.
\end{enumerate}
\end{lem}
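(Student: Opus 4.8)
The plan is to recognize the six maps in Eq.~(\ref{equ-4}) as a group $G$ of M\"obius transformations acting on $\mathbb{Z}_p\setminus\{0,1\}$, whose orbits are exactly the equivalence classes $[s]$. Writing the maps as $s,\ s^{-1},\ 1-s,\ (s-1)s^{-1},\ (1-s)^{-1},\ s(s-1)^{-1}$, a short composition check shows $G$ is closed and isomorphic to the symmetric group $S_3$: the three involutions are $s\mapsto s^{-1}$, $s\mapsto 1-s$, $s\mapsto s(s-1)^{-1}$, and the two elements of order three are $s\mapsto(s-1)s^{-1}$ and $s\mapsto(1-s)^{-1}$. First I would verify each map sends $\mathbb{Z}_p\setminus\{0,1\}$ into itself, so the action is well defined, and that distinct maps give distinct group elements for $p\ge5$. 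By the orbit--stabilizer theorem every class satisfies $|[s]|=6/|\mathrm{Stab}(s)|\in\{1,2,3,6\}$, so the whole problem reduces to computing, for each non-identity element of $G$, its fixed points in $\mathbb{Z}_p\setminus\{0,1\}$.

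Next I would carry out that fixed-point computation. Solving $g(s)=s$ for each involution gives exactly one fixed point: $s^{-1}=s$ forces $s=-1$, $1-s=s$ forces $s=2^{-1}$, and $s(s-1)^{-1}=s$ forces $s=2$ (with $0,1$ excluded). For either order-three map, $g(s)=s$ reduces to the single quadratic $s^2-s+1=0$. Part~(1) is then immediate: applying all six maps to $s=2$ produces $\{2,2^{-1},-1\}$, and these three values are pairwise distinct precisely when $p\nmid 3$, i.e. for all $p\ge5$; equivalently $\mathrm{Stab}(2)=\{\mathrm{id},\,s\mapsto s(s-1)^{-1}\}$ has order $2$, so $|[2]|=3$.

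The crux is deciding when $s^2-s+1=0$ has a root in $\mathbb{Z}_p$, since this governs whether any order-three element has a fixed point. Its discriminant is $-3$, which is nonzero mod $p$ for $p\ge5$, so solvability is equivalent to $\left(\tfrac{-3}{p}\right)=1$. Using Eq.~(\ref{equ-5}) together with Lemma~\ref{lem-4-2}(1) and (3) (with $q=3$), I would compute
$$\left(\tfrac{-3}{p}\right)=\left(\tfrac{-1}{p}\right)\left(\tfrac{3}{p}\right)=(-1)^{\frac{p-1}{2}}\cdot(-1)^{\frac{p-1}{2}}\left(\tfrac{p}{3}\right)=\left(\tfrac{p}{3}\right),$$
and since $\left(\tfrac{p}{3}\right)=1$ iff $p\equiv1\ (\mathrm{mod}\ 3)$, the quadratic is solvable iff $p\equiv1\ (\mathrm{mod}\ 3)$. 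As $p$ is odd this is exactly $p\equiv1\ (\mathrm{mod}\ 6)$, while $p\equiv5\ (\mathrm{mod}\ 6)$ is the insolvable case. I expect this reciprocity step, chiefly the bookkeeping of the Legendre symbols, to be the main obstacle.

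Finally I would assemble the two cases. A class has size $3$ iff $\mathrm{Stab}(s)$ has order $2$, i.e. $s$ is fixed by some involution, which happens exactly for $s\in\{-1,2^{-1},2\}=[2]$; a class has size $2$ iff $\mathrm{Stab}(s)$ has order $3$, i.e. $s^2-s+1=0$; and size $1$ is impossible, since it would require $s$ to be fixed by two distinct involutions, forcing two incompatible values for $p\ge5$. Hence if $p\equiv5\ (\mathrm{mod}\ 6)$ the quadratic has no root, so every $s\notin[2]$ has trivial stabilizer and $|[s]|=6$, giving (2). If $p\equiv1\ (\mathrm{mod}\ 6)$, the quadratic has two distinct roots $s_0,s_0'$ (distinct since $-3\neq0$), neither equal to $0$ or $1$ (substitution gives value $1$) nor lying in $[2]$ (substituting $2,2^{-1},-1$ gives nonzero values for $p\ge5$); by Vieta $s_0s_0'=1$ and $s_0+s_0'=1$, so $s_0'=s_0^{-1}=1-s_0\in[s_0]$, and the order-three stabilizer forces $[s_0]=\{s_0,s_0'\}$. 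Every remaining $s\notin[2]\cup[s_0]$ is fixed by no non-identity element, so $|[s]|=6$, completing (3).
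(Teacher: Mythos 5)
Your proof is correct, and it reaches the same two pivot facts as the paper --- the exceptional class $\{2,2^{-1},-1\}$ and the quadratic $s^2-s+1=0$ with discriminant $-3$ --- but it gets there by a genuinely different and more structural route. The paper argues by brute force: it tabulates all fifteen pairwise coincidences $x=y$ among the six expressions of Eq.~(\ref{equ-4}) (Tab.~\ref{tab-3}), checks that each one forces $s\in\{0,1,2,2^{-1},-1\}$ or $s^2-s+1=0$, and then settles solvability of that quadratic by computing $\left(\frac{-3}{p}\right)$ separately in the two cases $p\equiv 5$ and $p\equiv 1~(\mathrm{mod}~6)$, with explicit bookkeeping of the signs $(-1)^{\frac{p\pm 1}{2}}$. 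You instead observe that the six substitutions form the anharmonic group, a copy of $S_3$ acting by M\"obius transformations on $\mathbb{Z}_p\setminus\{0,1\}$, so each class $[s]$ is an orbit and $|[s]|=6/|\mathrm{Stab}(s)|$; this replaces the fifteen-row table by five fixed-point computations (one per non-identity element) and explains \emph{why} only the sizes $3$, $2$, $6$ can occur: size $3$ at the involution fixed points $2,2^{-1},-1$ (which form one orbit, giving (1)), size $2$ at the fixed points of the order-three elements, i.e. the roots of $s^2-s+1=0$, and size $1$ ruled out because the three involutions have pairwise distinct fixed points for $p\ge 5$. The two viewpoints are equivalent --- each row $h(s)=g(s)$ of Tab.~\ref{tab-3} is exactly the fixed-point condition for $g^{-1}h$ --- but yours is easier to audit. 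Your reciprocity step is also tidier and avoids the case split: by Eq.~(\ref{equ-5}) and Lemma~\ref{lem-4-2}, $\left(\frac{-3}{p}\right)=\left(\frac{-1}{p}\right)\left(\frac{3}{p}\right)=\left(\frac{p}{3}\right)$, which is $1$ exactly when $p\equiv 1~(\mathrm{mod}~3)$, i.e. $p\equiv 1~(\mathrm{mod}~6)$, whereas the paper evaluates $\left(\frac{p}{3}\right)$ by Lemma~\ref{lem-4-1} and multiplies the signs out in each congruence class. What the paper's method buys is self-containedness (no group-action machinery, only direct verification); what yours buys is brevity, the conceptual dividend that class sizes must divide $6$, and the Vieta argument $s_0'=s_0^{-1}=1-s_0$ in place of the paper's explicit computation of $[s_0]$. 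The small obligations you flag --- that the maps preserve $\mathbb{Z}_p\setminus\{0,1\}$ and compose as $S_3$ --- are routine and are implicitly subsumed by the paper's table, so there is no gap.
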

\begin{proof}
If $s=2$,  from Eq. (\ref{equ-4}) we obtain that $[2]=\{2,2^{-1},-1\}$ by simple computation, and thus (1) follows.

In what follows, we assume that $s\in\mathbb{Z}_p\setminus\{0,1\}$ and $s\not\in[2]$. From Eq. (\ref{equ-4}) we  know that
$[s]=\{s,s^{-1},1-s,(s-1)s^{-1},(1-s)^{-1},s(s-1)^{-1}\}$, in which some elements may be equal. For $x,y\in [s]$, we list all the possibilities for $x=y$ in Tab. \ref{tab-3}.
\begin{table}[t]\small{
\caption{\label{tab-3}\small{All the possibilities for $x=y$.}}
\begin{tabular*}{15cm}{@{\extracolsep{\fill}}cccc}
\toprule
   $x\in [s]$   & $y\in [s]$ & $x=y$ & Is possible? \\
  \midrule
  $s$ &$s^{-1}$  &  $s=\pm 1$ & No\\

  $s$ & $1-s$ &  $s=2^{-1}$ & No\\

  $s$ &$(s-1)s^{-1}$ &  $s^2-s+1=0$&Yes\\

  $s$ & $(1-s)^{-1}$ &  $s^2-s+1=0$&Yes\\

  $s$ &  $s(s-1)^{-1}$ & $s=2$&No\\

  $s^{-1}$ &$1-s$  &  $s^2-s+1=0$&Yes\\

  $s^{-1}$ &$(s-1)s^{-1}$ &  $s=1$&No\\

  $s^{-1}$ & $(1-s)^{-1}$ &  $s=2^{-1}$&No\\

  $s^{-1}$ & $s(s-1)^{-1}$ &  $s^2-s+1=0$&Yes\\

  $1-s$ & $(s-1)s^{-1}$ & $s=\pm 1$&No\\

  $1-s$ & $(1-s)^{-1}$ &  $s=0,2$&No\\

  $1-s$ & $s(s-1)^{-1}$ &  $s^2-s+1=0$&Yes\\

  $(s-1)s^{-1}$ & $(1-s)^{-1}$ &  $s^2-s+1=0$&Yes\\

  $(s-1)s^{-1}$ & $s(s-1)^{-1}$ &  $s=2^{-1}$&No\\

 $(1-s)^{-1}$ & $s(s-1)^{-1}$ &  $s=-1$&No\\
  \bottomrule
\end{tabular*}}
\end{table}
From Tab. \ref{tab-3} we see that whether or not $[s]$ contains  equal elements only  depends on  the solutions of the following equation:
\begin{equation}\label{equ-6}
s^2-s+1=0.
\end{equation}
Clearly, Eq. (\ref{equ-6}) is equivalent to
$(2s-1)^2=-3$ because $4\in \mathbb{Z}_p^*$ is invertible. Since $p\nmid (-3)$, by the definition of Legendre symbol we may conclude that Eq. (\ref{equ-6}) has solutions if and only if $\left(\frac{-3}{p}\right)=1$.

If $p\equiv5~(\mathrm{mod}~6)$, then $\left(\frac{p}{3}\right)\equiv p^{\frac{3-1}{2}}\equiv-1~(\mathrm{mod}~3)$ by Lemma \ref{lem-4-1}, and so $\left(\frac{p}{3}\right)=-1$. Thus
$\left(\frac{3}{p}\right)=(-1)^{\frac{(3-1)(p-1)}{4}}\left(\frac{p}{3}\right)=(-1)^{\frac{p+1}{2}}$ by Lemma \ref{lem-4-2} (3). Therefore, by Eq. (\ref{equ-5}) and Lemma \ref{lem-4-2} (1), we get
$\left(\frac{-3}{p}\right)=\left(\frac{-1}{p}\right)\left(\frac{3}{p}\right)=(-1)^{\frac{p-1}{2}}(-1)^{\frac{p+1}{2}}=(-1)^p=-1$. This implies that Eq. (\ref{equ-6}) has no solutions by the above arguments. Hence,  $|[s]|=6$ for all $s\not\in [2]$, and (2) follows.

If $p\equiv1$ (mod $6$), similarly, we obtain that $\left(\frac{p}{3}\right)=1$,  $\left(\frac{3}{p}\right)=(-1)^{\frac{(3-1)(p-1)}{4}}\left(\frac{p}{3}\right)=(-1)^{\frac{p-1}{2}}$, and thus $\left(\frac{-3}{p}\right)=\left(\frac{-1}{p}\right)\left(\frac{3}{p}\right)=(-1)^{\frac{p-1}{2}}(-1)^{\frac{p-1}{2}}=(-1)^{p-1}=1$. This implies that $-3$  is a square modulo  $p$, and so there exists some $x_0\in \mathbb{Z}_p$  such that $x_0^2=-3$. Note that $(-x_0)^2=-3$ and $x_0\neq-x_0$. Thus the equation $x^2=-3$ has exactly two distinct roots $\pm x_0$ in $\mathbb{Z}_p$ because it is a  quadratic equation over the finite field $\mathbb{F}_p=\mathbb{Z}_p$.  It follows that that Eq. (\ref{equ-6}) has exactly two distinct roots: $s_0=2^{-1}(1+x_0)$ and $s_0'=2^{-1}(1-x_0)$.
Clearly, $s_0,s_0'\not\in\{0,1\}$, $s_0'\in [s_0]$ because $s_0s_0'=1$,  and by simple computation  we obtain that $[s_0]=\{s_0,s_0^{-1}=s_0'\}$ and $[s_0]\neq[2]$. Moreover, if $s\not\in [2]\cup [s_0]$,  from  the above arguments  and Tab. \ref{tab-3} we may conclude that all  elements belonging to $[s]$ are distinct, and so $|[s]|=6$. Thus (3) follows.

We complete the proof.
\end{proof}
Note that if $p=3$, then $\tilde{n}_p=1$ (see Tab. \ref{tab-2}). By Lemma \ref{lem-3-5}, we  get the following result immediately.
\begin{thm}\label{thm-4-1}
Let $p$ be an odd prime. Then the number of equivalence classes of $\mathbb{Z}_p\setminus\{0,1\}$ is given by
$$
\tilde{n}_p=\left\{\begin{array}{ll}
1& \mbox{if $p=3$;}\\
\frac{p-5}{6}+1&\mbox{if $p\geq 5$ and $p\equiv 5~(\mathrm{mod}~6)$};\\ \frac{p-1}{6}+1&\mbox{if $p\geq 5$ and $p\equiv 1~(\mathrm{mod}~6)$}.
\end{array}
\right.
$$
\end{thm}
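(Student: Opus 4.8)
The plan is to count the equivalence classes of $\mathbb{Z}_p\setminus\{0,1\}$ by a simple partition argument, feeding in the class-size information established in Lemma~\ref{lem-4-3}. Since the classes of $\simeq$ partition $\mathbb{Z}_p\setminus\{0,1\}$, which has exactly $p-2$ elements, I would compute $\tilde{n}_p$ by subtracting off the ``small'' classes and distributing the remaining elements into classes of size $6$.

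First I would dispose of $p=3$ directly: here $\mathbb{Z}_3\setminus\{0,1\}=\{2\}$ is a single class, so $\tilde{n}_3=1$. For $p\ge 5$, Lemma~\ref{lem-4-3}(1) gives $[2]=\{2,2^{-1},-1\}$; a quick check that $2$, $2^{-1}$ and $-1$ are pairwise distinct modulo $p$ (each coincidence forces $p\mid 3$, hence $p=3$) shows $|[2]|=3$.

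Next I would split into the two residue cases. If $p\equiv 5\pmod 6$, then by Lemma~\ref{lem-4-3}(2) every class other than $[2]$ has size $6$; hence the $p-5$ elements outside $[2]$ split into $\frac{p-5}{6}$ classes (the divisibility by $6$ being guaranteed by $p\equiv 5\pmod 6$), and adding back the class $[2]$ gives $\tilde{n}_p=\frac{p-5}{6}+1$. If $p\equiv 1\pmod 6$, then by Lemma~\ref{lem-4-3}(3) there is in addition exactly one class $[s_0]=\{s_0,s_0'\}$ of size $2$, and every remaining class has size $6$; the $(p-2)-3-2=p-7$ elements outside $[2]\cup[s_0]$ therefore split into $\frac{p-7}{6}$ classes, and counting $[2]$ and $[s_0]$ as well yields $\tilde{n}_p=\frac{p-7}{6}+2=\frac{p-1}{6}+1$.

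Almost all of the substantive work is already contained in Lemma~\ref{lem-4-3}, so the only points demanding care here are bookkeeping: confirming that $[2]$ and $[s_0]$ are genuinely distinct classes of the stated sizes, and checking that the arithmetic simplifies to the forms in the statement. The main obstacle, such as it is, lies not in the present theorem but in Lemma~\ref{lem-4-3}, whose size computation rests on solving $s^2-s+1=0$ in $\mathbb{Z}_p$ and thereby on the quadratic-reciprocity evaluation of $\left(\frac{-3}{p}\right)$; granting that lemma, the count above is immediate.
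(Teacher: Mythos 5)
Your proposal is correct and is exactly the argument the paper intends: the paper derives Theorem~\ref{thm-4-1} ``immediately'' from the class-size information in Lemma~\ref{lem-4-3} (the citation of Lemma~\ref{lem-3-5} there appears to be a typo), and your partition count --- $|[2]|=3$, plus the extra size-$2$ class $[s_0]$ when $p\equiv 1\pmod 6$, with all remaining classes of size $6$ --- is precisely that immediate bookkeeping, carried out correctly. No gaps; your checks that $2$, $2^{-1}$, $-1$ are pairwise distinct for $p\ge 5$ and that $[s_0]\neq[2]$ are the only details needed and you supply both.
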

Note that all  cubic Cayley graphs on $D_{2p}$ of type-I are isomorphic (see Theorem \ref{thm-2-2}), and $\tilde{n}_p$  is equal to the  number of isomorphic classes of cubic Cayley graphs on $D_{2p}$ of type-II. By Theorem \ref{thm-4-1}, we obtain the main result of this section immediately.
\begin{thm}
Let $p$ be an odd prime. Then the number  of isomorphic classes of cubic Cayley graphs on $D_{2p}$ is given by
$$
\tilde{N}_p=\left\{\begin{array}{ll}
2& \mbox{if $p=3$;}\\
\frac{p-5}{6}+2&\mbox{if $p\geq 5$ and $p\equiv 5~(\mathrm{mod}~6)$};\\ \frac{p-1}{6}+2&\mbox{if $p\geq 5$ and $p\equiv 1~(\mathrm{mod}~6)$}.
\end{array}
\right.
$$
\end{thm}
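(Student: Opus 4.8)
The plan is to reduce the count of isomorphism classes to the already-established quantity $\tilde{n}_p$ of Theorem \ref{thm-4-1}, adding a single class to account for the type-I graphs. First I would observe that since $p$ is prime, Lemma \ref{lem-2-6} forces every cubic Cayley graph on $D_{2p}$ to be connected: in part (1) the condition $(k,p)=1$ holds automatically for any $k\not\equiv 0$, and in part (2) the condition $(k_1-k_2,k_1-k_3,k_2-k_3,p)=1$ holds whenever the three generators are distinct. Moreover, because $p$ is odd the element $a^{p/2}$ does not exist, so type-III cannot occur; hence every cubic Cayley graph on $D_{2p}$ is of type-I or of type-II, and there are no connectivity exceptions to exclude.

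Next I would account for the two types separately. By Theorem \ref{thm-2-2}, every connected type-I graph is isomorphic to $C_p\square K_2$, so the type-I graphs constitute exactly one isomorphism class. By the discussion preceding Theorem \ref{thm-4-1}, together with Theorem \ref{thm-3-1}, the type-II graphs fall into precisely $\tilde{n}_p$ isomorphism classes, where $\tilde{n}_p$ is the number of $\simeq$-equivalence classes of $\mathbb{Z}_p\setminus\{0,1\}$ evaluated in Theorem \ref{thm-4-1}.

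Finally, I would invoke Theorem \ref{thm-2-3}, which asserts that for odd $n$ a type-I cubic Cayley graph is never isomorphic to a type-II one; since $p$ is odd this applies verbatim. Consequently the single type-I class and the $\tilde{n}_p$ type-II classes are pairwise distinct, so the total is $\tilde{N}_p=\tilde{n}_p+1$. Substituting the three cases of Theorem \ref{thm-4-1} ($p=3$, $p\equiv 5\pmod 6$, and $p\equiv 1\pmod 6$) then yields the stated formula. There is essentially no remaining computational obstacle—the genuine difficulty, the quadratic-reciprocity evaluation of $\tilde{n}_p$, was already dispatched in Theorem \ref{thm-4-1}; the only point demanding care is confirming that the type-I and type-II families are truly disjoint up to isomorphism, which is exactly the content of Theorem \ref{thm-2-3}.
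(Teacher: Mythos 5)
Your proposal is correct and follows essentially the same route as the paper: decompose the cubic Cayley graphs into the single type-I class (Theorem \ref{thm-2-2}) and the $\tilde{n}_p$ type-II classes (Theorem \ref{thm-3-1} and Theorem \ref{thm-4-1}), use Theorem \ref{thm-2-3} for disjointness of the two families, and conclude $\tilde{N}_p=\tilde{n}_p+1$. Your added remarks on connectivity and the impossibility of type-III for odd $p$ are accurate but only make explicit what the paper already established at the start of Section 3.
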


\end{document}